\documentclass[12pt]{article}
\usepackage{latexsym, amssymb, amsthm}
\usepackage{dsfont}

\textheight=24.2cm
\textwidth = 6.375 true in
\topmargin=-16mm
\marginparsep=0cm
\oddsidemargin=-0.7cm
\evensidemargin=-0.7cm
\headheight=13pt
\headsep=0.8cm
\parskip=0pt
\baselineskip=27pt
\hfuzz=4pt
\widowpenalty=10000

\DeclareMathAlphabet\gothic{U}{euf}{m}{n}

\setlength{\marginparwidth}{1 true in}

%%%%
%%%% The next lines remove the column space in an eqnarray
%%%%

\makeatletter
\def\eqnarray{\stepcounter{equation}\let\@currentlabel=\theequation
\global\@eqnswtrue
\tabskip\@centering\let\\=\@eqncr
$$\halign to \displaywidth\bgroup\hfil\global\@eqcnt\z@
  $\displaystyle\tabskip\z@{##}$&\global\@eqcnt\@ne
  \hfil$\displaystyle{{}##{}}$\hfil
  &\global\@eqcnt\tw@ $\displaystyle{##}$\hfil
  \tabskip\@centering&\llap{##}\tabskip\z@\cr}

\def\endeqnarray{\@@eqncr\egroup
      \global\advance\c@equation\m@ne$$\global\@ignoretrue}

\def\@yeqncr{\@ifnextchar [{\@xeqncr}{\@xeqncr[5pt]}}
\makeatother

\begin{document}
\bibliographystyle{tom}

\newtheorem{lemma}{Lemma}[section]
\newtheorem{thm}[lemma]{Theorem}
\newtheorem{cor}[lemma]{Corollary}
\newtheorem{prop}[lemma]{Proposition}

\theoremstyle{definition}

\newtheorem{remark}[lemma]{Remark}
\newtheorem{exam}[lemma]{Example}
\newtheorem{definition}[lemma]{Definition}

\newcommand{\gota}{\gothic{a}}
\newcommand{\gotb}{\gothic{b}}
\newcommand{\gotc}{\gothic{c}}
\newcommand{\gote}{\gothic{e}}
\newcommand{\gotf}{\gothic{f}}
\newcommand{\gotg}{\gothic{g}}
\newcommand{\gothh}{\gothic{h}}
\newcommand{\gotk}{\gothic{k}}
\newcommand{\gotm}{\gothic{m}}
\newcommand{\gotn}{\gothic{n}}
\newcommand{\gotp}{\gothic{p}}
\newcommand{\gotq}{\gothic{q}}
\newcommand{\gotr}{\gothic{r}}
\newcommand{\gots}{\gothic{s}}
\newcommand{\gott}{\gothic{t}}
\newcommand{\gotu}{\gothic{u}}
\newcommand{\gotv}{\gothic{v}}
\newcommand{\gotw}{\gothic{w}}
\newcommand{\gotz}{\gothic{z}}
\newcommand{\gotA}{\gothic{A}}
\newcommand{\gotB}{\gothic{B}}
\newcommand{\gotG}{\gothic{G}}
\newcommand{\gotL}{\gothic{L}}
\newcommand{\gotS}{\gothic{S}}
\newcommand{\gotT}{\gothic{T}}

\newcounter{teller}
\renewcommand{\theteller}{(\alph{teller})}
\newenvironment{tabel}{\begin{list}%
{\rm  (\alph{teller})\hfill}{\usecounter{teller} \leftmargin=1.1cm
\labelwidth=1.1cm \labelsep=0cm \parsep=0cm}
                      }{\end{list}}

\newcounter{tellerr}
\renewcommand{\thetellerr}{(\roman{tellerr})}
\newenvironment{tabeleq}{\begin{list}%
{\rm  (\roman{tellerr})\hfill}{\usecounter{tellerr} \leftmargin=1.1cm
\labelwidth=1.1cm \labelsep=0cm \parsep=0cm}
                         }{\end{list}}

\newcounter{tellerrr}
\renewcommand{\thetellerrr}{(\Roman{tellerrr})}
\newenvironment{tabelR}{\begin{list}%
{\rm  (\Roman{tellerrr})\hfill}{\usecounter{tellerrr} \leftmargin=1.1cm
\labelwidth=1.1cm \labelsep=0cm \parsep=0cm}
                         }{\end{list}}

\newcounter{proofstep}
\newcommand{\nextstep}{\refstepcounter{proofstep}\vertspace \par 
          \noindent{\bf Step \theproofstep} \hspace{5pt}}
\newcommand{\firststep}{\setcounter{proofstep}{0}\nextstep}

\newcommand{\Ni}{\mathds{N}}
\newcommand{\Qi}{\mathds{Q}}
\newcommand{\Ri}{\mathds{R}}
\newcommand{\Ci}{\mathds{C}}
\newcommand{\Ti}{\mathds{T}}
\newcommand{\Zi}{\mathds{Z}}
\newcommand{\Fi}{\mathds{F}}

\renewcommand{\proofname}{{\bf Proof}}

\newcommand{\vertspace}{\vskip10.0pt plus 4.0pt minus 6.0pt}

\newcommand{\simh}{{\stackrel{{\rm cap}}{\sim}}}
\newcommand{\ad}{{\mathop{\rm ad}}}
\newcommand{\Ad}{{\mathop{\rm Ad}}}
\newcommand{\alg}{{\mathop{\rm alg}}}
\newcommand{\clalg}{{\mathop{\overline{\rm alg}}}}
\newcommand{\Aut}{\mathop{\rm Aut}}
\newcommand{\arccot}{\mathop{\rm arccot}}
\newcommand{\capp}{{\mathop{\rm cap}}}
\newcommand{\rcapp}{{\mathop{\rm rcap}}}
\newcommand{\diam}{\mathop{\rm diam}}
\newcommand{\divv}{\mathop{\rm div}}
\newcommand{\dom}{\mathop{\rm dom}}
\newcommand{\codim}{\mathop{\rm codim}}
\newcommand{\RRe}{\mathop{\rm Re}}
\newcommand{\IIm}{\mathop{\rm Im}}
\newcommand{\tr}{{\mathop{\rm Tr \,}}}
\newcommand{\Tr}{{\mathop{\rm Tr \,}}}
\newcommand{\Vol}{{\mathop{\rm Vol}}}
\newcommand{\card}{{\mathop{\rm card}}}
\newcommand{\rank}{\mathop{\rm rank}}
\newcommand{\supp}{\mathop{\rm supp}}
\newcommand{\sgn}{\mathop{\rm sgn}}
\newcommand{\essinf}{\mathop{\rm ess\,inf}}
\newcommand{\esssup}{\mathop{\rm ess\,sup}}
\newcommand{\Int}{\mathop{\rm Int}}
\newcommand{\lcm}{\mathop{\rm lcm}}
\newcommand{\loc}{{\rm loc}}
\newcommand{\HS}{{\rm HS}}
\newcommand{\Trn}{{\rm Tr}}
\newcommand{\n}{{\rm N}}
\newcommand{\WOT}{{\rm WOT}}

\newcommand{\at}{@}

\newcommand{\mod}{\mathop{\rm mod}}
\newcommand{\spann}{\mathop{\rm span}}
\newcommand{\one}{\mathds{1}}

\hyphenation{groups}
\hyphenation{unitary}

\newcommand{\tfrac}[2]{{\textstyle \frac{#1}{#2}}}

\newcommand{\ca}{{\cal A}}
\newcommand{\cb}{{\cal B}}
\newcommand{\cc}{{\cal C}}
\newcommand{\cd}{{\cal D}}
\newcommand{\ce}{{\cal E}}
\newcommand{\cf}{{\cal F}}
\newcommand{\ch}{{\cal H}}
\newcommand{\chs}{{\cal HS}}
\newcommand{\ci}{{\cal I}}
\newcommand{\ck}{{\cal K}}
\newcommand{\cl}{{\cal L}}
\newcommand{\cm}{{\cal M}}
\newcommand{\cn}{{\cal N}}
\newcommand{\co}{{\cal O}}
\newcommand{\cp}{{\cal P}}
\newcommand{\cs}{{\cal S}}
\newcommand{\ct}{{\cal T}}
\newcommand{\cx}{{\cal X}}
\newcommand{\cy}{{\cal Y}}
\newcommand{\cz}{{\cal Z}}

\newlength{\hightcharacter}
\newlength{\widthcharacter}
\newcommand{\covsup}[1]{\settowidth{\widthcharacter}{$#1$}\addtolength{\widthcharacter}{-0.15em}\settoheight{\hightcharacter}{$#1$}\addtolength{\hightcharacter}{0.1ex}#1\raisebox{\hightcharacter}[0pt][0pt]{\makebox[0pt]{\hspace{-\widthcharacter}$\scriptstyle\circ$}}}
\newcommand{\cov}[1]{\settowidth{\widthcharacter}{$#1$}\addtolength{\widthcharacter}{-0.15em}\settoheight{\hightcharacter}{$#1$}\addtolength{\hightcharacter}{0.1ex}#1\raisebox{\hightcharacter}{\makebox[0pt]{\hspace{-\widthcharacter}$\scriptstyle\circ$}}}
\newcommand{\scov}[1]{\settowidth{\widthcharacter}{$#1$}\addtolength{\widthcharacter}{-0.15em}\settoheight{\hightcharacter}{$#1$}\addtolength{\hightcharacter}{0.1ex}#1\raisebox{0.7\hightcharacter}{\makebox[0pt]{\hspace{-\widthcharacter}$\scriptstyle\circ$}}}

\thispagestyle{empty}

\vspace*{1cm}
\begin{center}
{\Large\bf The Dirichlet problem  \\[1mm]
 without the maximum principle} \\[10mm]

\large W. Arendt$^1$ and A.F.M. ter Elst$^2$

\end{center}

\vspace{5mm}

\begin{center}
{\bf Abstract}
\end{center}

\begin{list}{}{\leftmargin=1.7cm \rightmargin=1.7cm \listparindent=10mm 
   \parsep=0pt}
\item
Consider the Dirichlet problem with respect to an elliptic operator
\[
A = - \sum_{k,l=1}^d \partial_k \, a_{kl} \, \partial_l
   - \sum_{k=1}^d \partial_k \, b_k
   + \sum_{k=1}^d c_k \, \partial_k
   + c_0
\]
on a bounded Wiener regular open set $\Omega \subset \Ri^d$,
where $a_{kl}, c_k \in L_\infty(\Omega,\Ri)$ and 
$b_k,c_0 \in L_\infty(\Omega,\Ci)$.
Suppose that the associated operator on $L_2(\Omega)$ with 
Dirichlet boundary conditions is invertible.
Then we show that for all $\varphi \in C(\partial \Omega)$ there exists a 
unique $u \in C(\overline \Omega) \cap H^1_\loc(\Omega)$ such that 
$u|_{\partial \Omega} = \varphi$ and $A u = 0$.

In the case when $\Omega$ has a Lipschitz boundary and 
$\varphi \in C(\overline \Omega) \cap H^{1/2}(\overline \Omega)$, then we 
show that $u$ coincides with the variational solution in $H^1(\Omega)$.
\end{list}

\vspace{1cm}
\noindent
October 2017

\vspace{5mm}
\noindent
AMS Subject Classification: 31C25, 35J05, 31B05.

\vspace{5mm}
\noindent
Keywords: 
Dirichlet problem, Wiener regular, holomorphic semigroup.

\vspace{15mm}

\noindent
{\bf Home institutions:}    \\[3mm]
\begin{tabular}{@{}cl@{\hspace{10mm}}cl}
1. & Institute of Applied Analysis  & 
  2. & Department of Mathematics   \\
& University of Ulm   & 
  & University of Auckland   \\
& Helmholtzstr.\ 18 & 
  & Private bag 92019  \\
& 89081 Ulm & 
  & Auckland  \\
& Germany  & 
  & New Zealand  \\[8mm]
\end{tabular}

\newpage

\section{Introduction} \label{Dwreg1}

Let $\Omega \subset \Ri^d$ be an open bounded set with boundary~$\Gamma$.
Throughout this paper we assume that $d \geq 2$.
The classical Dirichlet problem is to find for each $\varphi \in C(\Gamma)$
a function $u \in C(\overline \Omega)$ 
such that $u|_\Gamma = \varphi$ and $\Delta u = 0$ as distribution on $\Omega$.
The set $\Omega$ is called {\bf Wiener regular} if for every 
$\varphi \in C(\Gamma)$ there exists a unique $u \in C(\overline \Omega)$ 
such that $u|_\Gamma = \varphi$ and $\Delta u = 0$ as distribution on $\Omega$.

The Dirichlet problem has been extended naturally to more general
second-order operators.
For all $k,l \in \{ 1,\ldots,d \} $ let $a_{kl} \colon \Omega \to \Ri$ 
be a bounded measurable function and suppose that there exists a
$\mu > 0$ such that 
\begin{equation}
\RRe \sum_{k,l=1}^d a_{kl}(x) \, \xi_k \, \overline{\xi_l} 
\geq \mu \, |\xi|^2
\label{eSwreg1;2}
\end{equation}
for all $x \in \Omega$ and $\xi \in \Ci^d$.
Further, for all $k \in \{ 1,\ldots,d \} $ let $b_k,c_k,c_0 \colon \Omega \to \Ci$
be bounded and measurable.
Define the map $\ca \colon H^1_\loc(\Omega) \to \cd'(\Omega)$ by
\[
\langle \ca u,v \rangle_{\cd'(\Omega) \times \cd(\Omega)}
= \sum_{k,l=1}^d \int_\Omega a_{kl} \, (\partial_k u) \, \overline{\partial_l v}
   + \sum_{k=1}^d \int_\Omega b_k \, u \, \overline{\partial_k v}
   + \sum_{k=1}^d \int_\Omega c_k \, (\partial_k u) \, \overline v
   + \int_\Omega c_0 \, u \, \overline v
\]
for all $u \in H^1_\loc(\Omega)$ and $v \in C_c^\infty(\Omega)$.
Given $\varphi \in C(\Gamma)$, by a {\bf classical solution} of the 
Dirichlet problem we understand a function 
$u \in C(\overline \Omega) \cap H^1_\loc(\Omega)$ satisfying $\ca u = 0$
and $u|_\Gamma = \varphi$.
For the pure second-order case (that is $b_k = c_k = c_0 = 0$)
Littman--Stampacchia--Weinberger
\cite{LSW} proved that for all $\varphi \in C(\Gamma)$
there exists a unique classical solution~$u$.
Then Stampacchia \cite{Stam2} Th\'eor\`eme~10.2
added real valued lower order terms, under the condition 
(see \cite{Stam2}, (9.2')) that there exists a 
$\mu' > 0$
such that 
\begin{equation}
\int_\Omega c_0 \, v + \sum_{k=1}^d \int_\Omega b_k \, \partial_k v
\geq \mu' \int_\Omega v
\label{eSwreg1;3.1}
\end{equation}
for all $v \in C_c^\infty(\Omega)^+$.
Gilbarg--Trudinger \cite{GT} Theorem~8.31 merely assume that 
\begin{equation}
\int_\Omega c_0 \, v + \sum_{k=1}^d \int_\Omega b_k \, \partial_k v
\geq 0
\label{eSwreg1;3}
\end{equation}
for all $v \in C_c^\infty(\Omega)^+$ in order to obtain the same conclusion.
A consequence of these assumptions is a weak maximum principle,
which implies that $\|u\|_{C(\overline \Omega)} \leq \|\varphi\|_{C(\Gamma)}$
for all $u \in H^1_\loc(\Omega) \cap C(\overline \Omega)$ satisfying $\ca u = 0$
and $u|_\Gamma = \varphi$.
We may consider (\ref{eSwreg1;3}) as a kind of submarkov condition since 
it is equivalent to $-\ca \one_\Omega \leq 0$ in $\cd'(\Omega)$.

The aim of this paper is to 
show that the positivity condition (\ref{eSwreg1;3}) and the maximum principle
are not needed for the well-posedness of the Dirichlet problem.
In addition we allow the $b_k$ and $c_0$ to be complex valued.
In order to state the main results of this paper in a more 
precise way we need a few definitions.
Define the form $\gota \colon H^1(\Omega) \times H^1(\Omega) \to \Ci$ by
\begin{equation}
\gota(u,v) 
= \sum_{k,l=1}^d \int_\Omega a_{kl} \, (\partial_k u) \, \overline{\partial_l v}
   + \sum_{k=1}^d \int_\Omega b_k \, u \, \overline{\partial_k v}
   + \sum_{k=1}^d \int_\Omega c_k \, (\partial_k u) \, \overline v
   + \int_\Omega c_0 \, u \, \overline v
.  
\label{eSwreg1;4}
\end{equation}
Let $A^D$ be the operator in $L_2(\Omega)$ associated with the 
form $\gota|_{H^1_0(\Omega) \times H^1_0(\Omega)}$.
In other words, $A^D$ is the realisation of the elliptic operator $\ca$ 
in $L_2(\Omega)$ with Dirichlet boundary conditions.
This operator has a compact resolvent.
Moreover, if (\ref{eSwreg1;3}) is valid, then $\ker A^D = \{ 0 \} $ by \cite{GT} 
Corollary~8.2.
Instead of (\ref{eSwreg1;3}) we assume the condition $\ker A^D = \{ 0 \} $,
which is equivalent to the uniqueness of the Dirichlet problem
(cf.\ Proposition~\ref{pwreg202.5} below).

The main result of this paper is the following well-posedness result 
for the Dirichlet problem.

\begin{thm} \label{twreg101}
Let $\Omega \subset \Ri^d$ be an open bounded Wiener regular set with $d \geq 2$.
For all $k,l \in \{ 1,\ldots,d \} $ let $a_{kl} \colon \Omega \to \Ri$ 
be a bounded measurable function and suppose that there exists a
$\mu > 0$ such that 
\[
\RRe \sum_{k,l=1}^d a_{kl}(x) \, \xi_k \, \overline{\xi_l} 
\geq \mu \, |\xi|^2
\]
for all $x \in \Omega$ and $\xi \in \Ci^d$.
Further, for all $k \in \{ 1,\ldots,d \} $ let $b_k,c_0 \colon \Omega \to \Ci$
and $c_k \colon \Omega \to \Ri$
be bounded and measurable.
Let $A^D$ be as above.
Suppose $0 \not\in \sigma(A^D)$.
Then for all $\varphi \in C(\Gamma)$
there exists a unique $u \in C(\overline \Omega) \cap H^1_\loc(\Omega)$
such that $u|_\Gamma = \varphi$ and $\ca u = 0$.

Moreover, there exists a constant $c > 0$ such that 
\[
\|u\|_{C(\overline \Omega)} \leq c \, \|\varphi\|_{C(\Gamma)}
\]
for all $\varphi \in C(\Gamma)$,
where $u \in C(\overline \Omega) \cap H^1_\loc(\Omega)$
is such that $u|_\Gamma = \varphi$ and $\ca u = 0$.
\end{thm}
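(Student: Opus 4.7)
Uniqueness is immediate from Proposition~\ref{pwreg202.5}: the difference $u_1 - u_2$ of two classical solutions lies in $C(\overline\Omega) \cap H^1_\loc(\Omega)$, vanishes on $\Gamma$, and satisfies $\ca(u_1 - u_2) = 0$, so the hypothesis $0 \notin \sigma(A^D)$ forces $u_1 = u_2$. The task is therefore existence together with the norm estimate.

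For existence my plan is a three-step construction. First I would treat the dense subclass of boundary data of the form $\varphi = \Phi|_\Gamma$ with $\Phi \in C(\overline\Omega) \cap H^1(\Omega)$. Because $A^D$ is invertible, the equation $\gota(w,v) = -\gota(\Phi,v)$ for all $v \in H^1_0(\Omega)$ has a unique solution $w \in H^1_0(\Omega)$, and $u := \Phi + w$ lies in $H^1(\Omega)$, satisfies $\ca u = 0$ in $\cd'(\Omega)$, and has trace $\varphi$. Interior continuity of $u$ is the De~Giorgi--Nash estimate for $L_\infty$-coefficients; continuity at each boundary point combines the Wiener regularity of $\Omega$, which supplies a local barrier for $-\Delta$, with the observation going back to \cite{LSW} and \cite{Stam2} that such barriers transfer to any uniformly elliptic divergence-form operator with bounded lower-order terms.

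The real obstacle is the uniform estimate $\|u\|_{C(\overline\Omega)} \leq c \, \|\varphi\|_{C(\Gamma)}$, which must replace the missing maximum principle. I would argue by contradiction: if the estimate failed, there would exist $\Phi_n$ as above producing $u_n$ with $\|\Phi_n|_\Gamma\|_{C(\Gamma)} \leq 1$ but $\|u_n\|_{C(\overline\Omega)} \to \infty$. Setting $v_n := u_n / \|u_n\|_{C(\overline\Omega)}$ gives $\ca v_n = 0$, $\|v_n\|_{C(\overline\Omega)} = 1$, and $v_n|_\Gamma \to 0$ uniformly. Caccioppoli together with De~Giorgi--Nash then furnish a subsequence converging locally uniformly and weakly in $H^1_\loc(\Omega)$ to a limit $v$ with $\ca v = 0$ and $\|v\|_{L_\infty(\Omega)} \leq 1$. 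The delicate step is to upgrade this to $v \in C(\overline\Omega)$ with $v|_\Gamma = 0$, after which Proposition~\ref{pwreg202.5} forces $v = 0$, hence $v_n \to 0$ locally uniformly; combined with a \emph{uniform} boundary modulus of continuity for $(v_n)$ this contradicts $\|v_n\|_{C(\overline\Omega)} = 1$. Obtaining that uniform boundary modulus is the hardest single point of the whole argument and requires a quantitative form of the barrier estimate from the previous step, whose construction depends only on $\Omega$ and the ellipticity data.

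The final step is density. For arbitrary $\varphi \in C(\Gamma)$, Tietze extension followed by mollification provides $\Phi_n \in C(\overline\Omega) \cap H^1(\Omega)$ with $\Phi_n|_\Gamma \to \varphi$ uniformly; the associated solutions $u_n$ then form a Cauchy sequence in $C(\overline\Omega)$ by the uniform estimate applied to differences, and the limit $u$ solves $\ca u = 0$ in $\cd'(\Omega)$ by passing weak $H^1_\loc$-limits through $C_c^\infty(\Omega)$ test functions. This $u$ satisfies the announced bound with the same constant $c$.
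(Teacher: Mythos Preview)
Your outline has a genuine gap at exactly the point where the maximum principle would normally enter. You invoke ``the observation going back to \cite{LSW} and \cite{Stam2} that such barriers transfer to any uniformly elliptic divergence-form operator with bounded lower-order terms,'' but that is not what those references establish. Littman--Stampacchia--Weinberger treat the \emph{pure} second-order case, and Stampacchia's Th\'eor\`eme~10.2 (likewise \cite{GT} Theorem~8.31) explicitly requires the positivity condition~(\ref{eSwreg1;3}) in order to run the barrier comparison. Without that condition one cannot compare a solution of $\ca u = 0$ with a barrier, so neither the boundary continuity of $u$ in your first step nor the uniform boundary modulus for the normalised sequence $(v_n)$ in your contradiction argument is justified. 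You correctly identify the latter as ``the hardest single point,'' but the sketch gives no mechanism for producing it once comparison is unavailable; a uniform boundary modulus for solutions of $\ca u = 0$ with small boundary data is essentially the theorem itself. (A secondary issue: for $\Phi$ merely in $C(\overline\Omega)\cap H^1(\Omega)$ the right-hand side $-\ca\Phi$ has only $L_2$ divergence-form data, which is not enough to invoke global De~Giorgi--Moser bounds on $w$; one needs $\Phi\in C^1$ at least.)

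The paper avoids barriers for $\ca$ altogether. It drops the coefficients $b_k,c_0$ to obtain the operator $\cb_\lambda$, which \emph{does} satisfy Stampacchia's positivity condition and hence the maximum-principle estimate $\|\tilde u\|_{C(\overline\Omega)}\le\|\varphi\|_{C(\Gamma)}$ for the corresponding solution. The missing lower-order terms are then reinstated by an $L_p$--to--$L_q$ bootstrap (Lemma~\ref{lwreg207} and Lemma~\ref{lwreg210}): writing $\tilde u-u=R_\lambda u$ and iterating the Sobolev gain from $L_2$ up to $L_\infty$, with $\lambda$ chosen large so that the self-interaction term carries a coefficient $<\tfrac12$. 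This yields $\|u\|_{C(\overline\Omega)}\le c\,\|\varphi\|_{C(\Gamma)}$ constructively for the shifted problem $\ca_\lambda u=0$; the shift is then removed via $u=\tilde u+\lambda(A^D)^{-1}\tilde u$ and Corollary~\ref{cwreg208}. No compactness or contradiction argument is needed, and the only place a barrier appears is inside Stampacchia's result for $\cb_\lambda$, where it is legitimate.
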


Instead of the homogeneous equation $\ca u = 0$ one can also consider
the inhomogeneous equation $\ca u = f_0 + \sum_{k=1}^d \partial_k f_k$.
We shall do that in Theorem~\ref{twreg216}.

\smallskip

Adopt the notation and assumptions of Theorem~\ref{twreg101}.
Define $P \colon C(\Gamma) \to C(\overline \Omega)$ by $P \varphi = u$,
where $u \in C(\overline \Omega) \cap H^1_\loc(\Omega)$
is such that $u|_\Gamma = \varphi$ and $\ca u = 0$.
Note that $P \varphi$ is the {\bf classical solution} of the Dirichlet problem.

If $\Omega$ has even a Lipschitz boundary
(which implies Wiener regularity), then there is also a variational
solution of the Dirichlet problem that we describe next.
Denote by $\Tr \colon H^1(\Omega) \to L_2(\Gamma)$ the trace operator.
Again let $a_{kl},b_k,c_k,c_0 \in L_\infty(\Omega)$ and 
suppose that the ellipticity condition (\ref{eSwreg1;2}) is satisfied.
Further suppose that $0 \not\in \sigma(A^D)$.
Then for each $\varphi \in \Tr H^1(\Omega)$
there exists a unique $u \in H^1(\Omega)$, called the 
{\bf variational solution}, such that 
$\ca u = 0$ and $\Tr u = \varphi$ (cf.\ Lemma~\ref{lwreg202}).
Define $\gamma \colon \Tr H^1(\Omega) \to H^1(\Omega)$ by setting 
$\gamma \varphi = u$.

The second result of this paper says that the variational
solution and the classical solution coincide, if both are defined.

\begin{thm} \label{twreg102}
Adopt the notation and assumptions of Theorem~\ref{twreg101}.
Suppose that $\Omega$ has a Lipschitz boundary.
Let $\varphi \in C(\Gamma) \cap \Tr H^1(\Omega)$.
Then $P \varphi = \gamma \varphi$ almost everywhere on $\Omega$.
\end{thm}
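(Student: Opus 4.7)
The guiding idea is to show that the variational solution $\gamma\varphi$ admits a representative in $C(\overline\Omega) \cap H^1_\loc(\Omega)$ with continuous boundary values $\varphi$; the uniqueness part of Theorem~\ref{twreg101} then identifies it with $P\varphi$ almost everywhere. I would proceed by approximation. Because $\Omega$ has a Lipschitz boundary and $\varphi \in C(\Gamma) \cap H^{1/2}(\Gamma)$, one can exhibit a sequence $\Phi_n \in C^\infty(\overline\Omega)$ such that $\varphi_n := \Phi_n|_\Gamma$ converges to $\varphi$ simultaneously in $C(\Gamma)$ and in $\Tr H^1(\Omega) = H^{1/2}(\Gamma)$: extend $\varphi$ to the harmonic extension $\Phi \in C(\overline\Omega) \cap H^1(\Omega)$ (well defined because $\Omega$ is Wiener regular and the variational-versus-classical identification for the Laplacian follows from the maximum principle), then mollify $\Phi$ against a partition of unity subordinate to a Lipschitz atlas of~$\Gamma$.

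For the smoothed data I would apply Theorem~\ref{twreg216}. Writing $w_n := \gamma\varphi_n = \Phi_n + v_n$ with $v_n \in H^1_0(\Omega)$, the function $v_n$ satisfies $\ca v_n = -\ca \Phi_n$ in $\cd'(\Omega)$, and this right-hand side is of the form $f_0^{(n)} + \sum_{l=1}^d \partial_l g_l^{(n)}$ with $f_0^{(n)}, g_l^{(n)} \in L_\infty(\Omega)$ (explicit combinations of $\Phi_n$, its first derivatives, and the coefficients of~$\ca$). By Theorem~\ref{twreg216} applied with zero boundary data, $v_n$ has a representative in $C(\overline\Omega) \cap H^1_0(\Omega)$ with $v_n|_\Gamma = 0$. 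Hence $w_n \in C(\overline\Omega) \cap H^1_\loc(\Omega)$ satisfies $\ca w_n = 0$ and $w_n|_\Gamma = \varphi_n$, and the uniqueness part of Theorem~\ref{twreg101} yields $w_n = P\varphi_n$ on $\overline\Omega$; in particular $\gamma\varphi_n = P\varphi_n$ almost everywhere on~$\Omega$.

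Finally I would pass to the limit. The a priori bound in Theorem~\ref{twreg101} gives $P\varphi_n \to P\varphi$ uniformly on $\overline\Omega$, while continuity of $\gamma \colon \Tr H^1(\Omega) \to H^1(\Omega)$ (a consequence of the invertibility of $A^D$ and the boundedness of $\gota$ on $H^1(\Omega) \times H^1(\Omega)$) gives $\gamma\varphi_n \to \gamma\varphi$ in $H^1(\Omega)$, hence in $L_2(\Omega)$. The identity $\gamma\varphi_n = P\varphi_n$ a.e.\ therefore passes to the limit. The step I expect to be the main obstacle is the simultaneous approximation: the density of $C^\infty(\Gamma)$ in $C(\Gamma) \cap H^{1/2}(\Gamma)$ for Lipschitz~$\Gamma$ is standard, but a careful implementation via a Lipschitz atlas is required to guarantee convergence in the uniform and in the fractional Sobolev topologies at once.
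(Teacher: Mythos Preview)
Your reduction to the simultaneous approximation of $\varphi$ in $C(\Gamma)$ and in $H^{1/2}(\Gamma)$ is exactly the right strategy, and the second and third paragraphs (identifying $\gamma\varphi_n$ with $P\varphi_n$ for smooth data via Theorem~\ref{twreg216}/Proposition~\ref{pwreg240}, then passing to the limit using the continuity of $P$ and of $\gamma$) are correct. The gap is in the first paragraph: your claim that the harmonic extension of $\varphi$ lies in $C(\overline\Omega)\cap H^1(\Omega)$ because ``the variational-versus-classical identification for the Laplacian follows from the maximum principle'' is circular. The maximum principle gives uniqueness among classical solutions, but it does \emph{not} show that the variational harmonic extension $\gamma_\Delta\varphi\in H^1(\Omega)$ is continuous up to the boundary, nor that the Perron solution $P_\Delta\varphi\in C(\overline\Omega)$ lies in $H^1(\Omega)$; these two functions do not a priori live in a common space to which a comparison principle applies. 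That identification for the Laplacian is precisely Theorem~\ref{tdtnc401} in the paper, and the paper proves it via Dahlberg's theorem that harmonic measure on a Lipschitz boundary has an $L_1$ density $k_x$ with respect to surface measure.

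That said, the alternative you sketch in your last paragraph---density of Lipschitz functions (equivalently, restrictions of $C^\infty(\Ri^d)$) in $C(\Gamma)\cap H^{1/2}(\Gamma)$ via mollification in local bi-Lipschitz charts---does work and is not circular: pulling back $\chi_j\varphi$ to $\Ri^{d-1}$, mollifying, and pushing forward produces Lipschitz approximants converging simultaneously in $C(\Gamma)$ and (by bi-Lipschitz invariance of the Gagliardo seminorm) in $H^{1/2}(\Gamma)$; these extend to $C(\overline\Omega)\cap W^{1,\infty}(\Omega)$ and your limit argument then applies directly. This route is genuinely different from, and more elementary than, the paper's: the paper's approximating sequence $\varphi_n$ converges to $\varphi$ only in $H^{1/2}(\Gamma)$ and pointwise a.e.\ with a uniform bound, \emph{not} in $C(\Gamma)$, which is why Dahlberg's $k_x\in L_1(\Gamma)$ is needed there to invoke dominated convergence for $(P\varphi_n)(x)=\int_\Gamma k_x\varphi_n\,d\sigma$. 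Your simultaneous-approximation approach bypasses Dahlberg entirely, at the cost of carrying out the chart-and-mollification construction carefully; so drop the harmonic-extension detour and implement the boundary mollification directly.
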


The last main result of this paper concerns a parabolic equation.
Let $A_c$ denote the part of the operator $A^D$ in $C_0(\Omega)$.
So 
\[
D(A_c) = \{ u \in D(A^D) \cap C_0(\Omega) : A^D u \in C_0(\Omega) \}
\]
and $A_c = A^D|_{D(A_c)}$.

\begin{thm} \label{twreg103}
Adopt the notation and assumptions of Theorem~\ref{twreg101}.
Then $-A_c$ generates a holomorphic $C_0$-semigroup on $C_0(\Omega)$.
Moreover, $e^{-t A_c} u = e^{-t A^D} u$ for all $u \in C_0(\Omega)$
and $t > 0$.
\end{thm}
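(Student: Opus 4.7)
The plan is to pass from the $L_2$-theory to $C_0(\Omega)$ in three steps: (i) resolvent invariance, (ii) a sectorial resolvent estimate on $C_0(\Omega)$, and (iii) density of $D(A_c)$ plus identification. For (i), fix $\lambda \in \rho(-A^D)$ and $f \in C_0(\Omega)$, and set $u := (\lambda + A^D)^{-1} f \in H^1_0(\Omega)$, so that $(\ca + \lambda) u = f$ in $\cd'(\Omega)$. Apply the inhomogeneous Theorem~\ref{twreg216} to the shifted operator $\ca + \lambda$ (which still satisfies the standing hypotheses, since $\lambda \in \rho(-A^D)$) with source $f$ and zero boundary data; the resulting solution $v \in C(\overline\Omega) \cap H^1_\loc(\Omega)$ with $v|_\Gamma = 0$ must agree with $u$ by uniqueness in $H^1_0(\Omega)$. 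Hence $u \in C_0(\Omega)$ and $(\lambda + A_c)^{-1} = (\lambda + A^D)^{-1}|_{C_0(\Omega)}$.

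For (ii), recall that $-A^D$ generates a bounded holomorphic semigroup on $L_2(\Omega)$ because $A^D$ is associated with the sectorial form $\gota|_{H^1_0 \times H^1_0}$. A Davies--Gaffney perturbation argument, applied to $\gota$ with complex $L_\infty$-coefficients after an exponential shift restoring coercivity, yields Gaussian kernel bounds $|K_z(x,y)| \leq C\,|z|^{-d/2}\,e^{\omega|z|}$ for the kernel $K_z$ of $e^{-zA^D}$ on a complex sector $\Sigma_\theta$ about the positive real axis. Since $\Omega$ is bounded, integration in $y$ gives $\|e^{-zA^D}\|_{L_\infty \to L_\infty} \leq C'\,e^{\omega|z|}$ on $\Sigma_\theta$, and integrating the Laplace representation $(\lambda + A^D)^{-1} = \int_0^\infty e^{-\lambda t}\,e^{-tA^D}\,dt$ along rays into $\Sigma_\theta$ yields $\|(\lambda + A^D)^{-1}\|_{L_\infty \to L_\infty} \leq M/|\lambda|$ on a sector $\Sigma_{\pi/2 + \theta}$ for $|\lambda|$ sufficiently large. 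Combined with (i), this is a sectorial bound for $(\lambda + A_c)^{-1}$ on $C_0(\Omega)$.

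For (iii), density of $D(A_c)$ in $C_0(\Omega)$ follows from the uniform convergence $\lambda(\lambda + A^D)^{-1} f \to f$ on $\overline\Omega$ as $\lambda \to \infty$ for $f \in C_c^\infty(\Omega)$, a consequence of the Laplace formula together with the kernel bounds and pointwise concentration near the diagonal. The sectorial characterisation of holomorphic generators then shows that $-A_c$ generates a holomorphic $C_0$-semigroup $T_c(\cdot)$ on $C_0(\Omega)$, and the agreement of resolvents combined with the inverse Laplace representation forces $T_c(t)u = e^{-tA^D}u$ for every $u \in C_0(\Omega)$ and $t>0$.

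The main technical obstacle is the Gaussian kernel bound of Step~(ii) uniformly on a complex sector, not merely for real $t > 0$: the latter is classical for elliptic divergence-form operators with $L_\infty$-coefficients, but its holomorphic extension requires careful use of Davies' exponential twist to accommodate the complex-valued lower-order coefficients $b_k$ and $c_0$. Once this kernel estimate is in hand, the remaining steps reduce to standard abstract semigroup theory.
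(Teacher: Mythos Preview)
Your resolvent-based route is a legitimate alternative to the paper's direct restriction of the $L_2$-semigroup, and steps~(i) and~(ii) are essentially sound: the paper obtains $C_0$-invariance from Corollary~\ref{cwreg208} and the Gaussian kernel bounds from \cite{Ouh5} and \cite{AE1}, which is the same content you invoke in a different packaging.

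The genuine gap is in step~(iii), not in the complex-sector Gaussian bounds you flag as the main obstacle. Your density argument asserts that $\lambda(\lambda+A^D)^{-1}f \to f$ uniformly on $\overline\Omega$ for $f \in C_c^\infty(\Omega)$ ``as a consequence of the Laplace formula together with the kernel bounds and pointwise concentration near the diagonal''. But Gaussian \emph{upper} bounds only tell you that $K_t(x,\cdot)$ is small away from~$x$; they say nothing about the total mass $\int_\Omega K_t(x,y)\,dy$, which your argument implicitly needs to tend to~$1$. For real coefficients one could invoke positivity or domination by a Markovian semigroup, but here $b_k$ and $c_0$ are complex and no such mechanism is available. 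Note also that $C_c^\infty(\Omega)\not\subset D(A^D)$ when the $a_{kl}$ are merely in $L_\infty$, so you cannot simply write $\lambda(\lambda+A^D)^{-1}f-f=-(\lambda+A^D)^{-1}A^D f$ and apply the $L_\infty$ resolvent bound. In effect, the uniform convergence you claim is equivalent to the strong continuity of the restricted semigroup on a dense set, which is precisely what has to be proved.

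The paper breaks this circularity by establishing density of $D(A_c)$ independently, via a duality argument (Lemma~\ref{lwreg311}): given a measure $\rho\in M(\Omega)$ annihilating $D(A_c)$, one uses that $(A^D)^{-1}$ extends continuously from $W^{-1,p}(\Omega)$ to $C_0(\Omega)$ (a consequence of Proposition~\ref{pwreg214}), dualises to get $((A^D)^*)^{-1}\colon M(\Omega)\to W^{1,q}_0(\Omega)$, and extracts a weak limit in the reflexive space $W^{1,q}_0(\Omega)$ to conclude $\rho=0$. With density in hand, strong continuity on $D(A_c)$ is immediate from $\|(I-S_t)u\|_\infty\le Mt\,\|A_c u\|_\infty$, and holomorphy follows from the Gaussian bounds as you indicate. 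You could equally well feed this density lemma into your sectorial-resolvent framework; but some argument of this type, independent of the semigroup's behaviour at $t=0^+$, is required.
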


In Section~\ref{Swreg2} we prove Theorem~\ref{twreg101} via 
an iteration argument.
Section~\ref{Swreg3new} is devoted to the comparison of the classical
and the variational solutions of the Dirichlet problem.
Theorem~\ref{twreg102} is proved there with the help of a deep 
result of Dahlberg \cite{Dahlberg}.
We consider the semigroup on $C_0(\Omega)$ in Section~\ref{Swreg4new} and 
prove Theorem~\ref{twreg103}.

\section{The Dirichlet problem} \label{Swreg2}

In this section we prove Theorem~\ref{twreg101} on the well-posedness
of the Dirichlet problem.
The technique is a reduction to the Stampacchia result mentioned in the 
introduction.
For this reason we introduce the following two forms and operators.

Adopt the notation and assumptions of Theorem~\ref{twreg101}.
For all $\lambda \in \Ri$ define the forms 
$\gota_\lambda, \gotb_\lambda \colon H^1(\Omega) \times H^1(\Omega) \to \Ci$
by 
\begin{eqnarray*}
\gota_\lambda(u,v) 
& = & \gota(u,v) + \lambda \, (u,v)_{L_2(\Omega)}  \quad \mbox{and}  \\
\gotb_\lambda(u,v) 
& = & \sum_{k,l=1}^d \int_\Omega a_{kl} \, (\partial_k u) \, \overline{\partial_l v}
   + \sum_{k=1}^d \int_\Omega c_k \, (\partial_k u) \, \overline v
   + \lambda \int_\Omega u \, \overline v
,
\end{eqnarray*}
where $\gota$ is as in (\ref{eSwreg1;4}).
Define similarly $\ca_\lambda,\cb_\lambda \colon H^1_\loc(\Omega) \to \cd'(\Omega)$
and let $B^D$ be the operator associated with the sesquilinear form
$\gotb_0|_{H^1_0(\Omega) \times H^1_0(\Omega)}$.
It follows from ellipticity that there exists a $\lambda_0 > 0$ 
such that 
\[
\frac{\mu}{2} \, \|v\|_{H^1(\Omega)}^2
\leq \RRe \gota_{\lambda_0}(v)
\quad \mbox{and} \quad
\frac{\mu}{2} \, \|v\|_{H^1(\Omega)}^2
\leq \RRe \gotb_{\lambda_0}(v)
\]
for all $v \in H^1(\Omega)$.
Note that $\cb_\lambda$ satisfies the submarkovian condition 
$- \cb_\lambda \one_\Omega \leq 0$, that is (\ref{eSwreg1;3}),
and even Stampacchia's condition (\ref{eSwreg1;3.1})
for all $\lambda > 0$.
So we can and will apply Stampacchia's result (in the proof of 
Lemma~\ref{lwreg210}).

We first investigate the operator $A^D$ in $L_2(\Omega)$.
Note that $f_0 + \sum_{k=1}^d \partial_k f_k \in \cd'(\Omega)$
for all $f_0,f_1,\ldots,f_d \in L_1(\Omega)$.
The next lemma is also valid if the $a_{kl}$ and $c_k$ are complex valued.

\begin{lemma} \label{lwreg202}
Let $f_1,\ldots,f_d \in L_2(\Omega)$.
Let $\tilde p \in (1,\infty)$ be such that 
$\tilde p \geq \frac{2d}{d+2}$.
Further let $f_0 \in L_{\tilde p}(\Omega)$.
Then there exists a unique $u \in H^1_0(\Omega)$ such that 
$\ca u = f_0 + \sum_{k=1}^d \partial_k f_k$.
\end{lemma}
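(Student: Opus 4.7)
The plan is to reformulate the equation variationally on the restriction of $\gota$ to $H^1_0(\Omega) \times H^1_0(\Omega)$ and then to invoke the Fredholm alternative. First I would rewrite the problem as: find $u \in H^1_0(\Omega)$ such that $\gota(u,v) = \int_\Omega f_0 \, \overline v - \sum_{k=1}^d \int_\Omega f_k \, \overline{\partial_k v}$ for all $v \in H^1_0(\Omega)$. Testing against $v \in C_c^\infty(\Omega)$ recovers the distributional equation $\ca u = f_0 + \sum_k \partial_k f_k$, and the density of $C_c^\infty(\Omega)$ in $H^1_0(\Omega)$ together with continuity of both sides in $v$ (verified next) gives the equivalence.

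Next I would check that the antilinear functional on the right-hand side, call it $F$, extends continuously to $H^1_0(\Omega)$. For $k \ge 1$, Cauchy--Schwarz gives $\bigl|\int_\Omega f_k \, \overline{\partial_k v}\bigr| \le \|f_k\|_2 \, \|v\|_{H^1(\Omega)}$. For the term involving $f_0$, the hypothesis $\tilde p \ge 2d/(d+2)$ is equivalent, when $d \ge 3$, to $\tilde p' \le 2^* = 2d/(d-2)$, so the Sobolev embedding $H^1_0(\Omega) \hookrightarrow L_{\tilde p'}(\Omega)$ combined with H\"older's inequality bounds this term by a constant multiple of $\|f_0\|_{\tilde p} \, \|v\|_{H^1(\Omega)}$; the two-dimensional case uses $H^1_0(\Omega) \hookrightarrow L_q(\Omega)$ for every $q < \infty$. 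Hence $F$ belongs to the antidual $H^{-1}(\Omega) := H^1_0(\Omega)^*$.

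The coercivity inequality stated just before the lemma and the Lax--Milgram theorem then show that the form $\gota_{\lambda_0}$ induces a topological isomorphism $T_{\lambda_0} \colon H^1_0(\Omega) \to H^{-1}(\Omega)$. Since $\Omega$ is bounded, the Rellich--Kondrachov theorem provides a compact embedding $H^1_0(\Omega) \hookrightarrow L_2(\Omega) \hookrightarrow H^{-1}(\Omega)$, so the operator $T \colon H^1_0(\Omega) \to H^{-1}(\Omega)$ associated to $\gota$ differs from $T_{\lambda_0}$ by a compact operator, hence is Fredholm of index $0$.

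For uniqueness, if $u \in H^1_0(\Omega)$ satisfies $\gota(u,v) = 0$ for every $v \in H^1_0(\Omega)$, then by the defining property of the associated operator $u \in D(A^D)$ with $A^D u = 0$, which forces $u = 0$ since $0 \notin \sigma(A^D)$. Thus $T$ is injective, and the Fredholm alternative gives bijectivity, yielding a unique $u \in H^1_0(\Omega)$ with $T u = F$. The only genuinely delicate point in the argument is the Sobolev threshold, which is precisely what identifies the admissible class of right-hand sides $f_0 \in L_{\tilde p}(\Omega)$ with bounded functionals on $H^1_0(\Omega)$; the remainder is a routine application of the form machinery.
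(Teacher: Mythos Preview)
Your proof is correct and follows essentially the same route as the paper's: both verify that the right-hand side defines a bounded antilinear functional on $H^1_0(\Omega)$ via the Sobolev embedding determined by the threshold $\tilde p \geq \frac{2d}{d+2}$, and both obtain invertibility of the form operator from the Fredholm alternative together with the injectivity assumption $\ker A^D = \{0\}$. The only cosmetic difference is that the paper invokes a packaged ``Fredholm--Lax--Milgram lemma'' (working with $T \colon H^1_0(\Omega) \to H^1_0(\Omega)$ via the Riesz representation), whereas you spell out the compact-perturbation argument with $T \colon H^1_0(\Omega) \to H^{-1}(\Omega)$ directly.
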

\begin{proof}
There exists a unique $T \in \cl(H^1_0(\Omega))$ such that 
$(T u,v)_{H^1_0(\Omega)} = \gota(u,v)$
for all $u,v \in H^1_0(\Omega)$.
Then $T$ is injective because $\ker A^D = \{ 0 \} $.
Moreover, the inclusion $H^1_0(\Omega) \hookrightarrow L_2(\Omega)$ is compact.
Hence the operator $T$ is invertible by the Fredholm--Lax--Milgram lemma,
\cite{AEKS} Lemma~4.1.
Clearly $v \mapsto \sum_{k=1}^d (f_k, \partial_k v)_{L_2(\Omega)}$ 
is continuous from $H^1_0(\Omega)$ into $\Ci$.
Define $F \colon C_c^\infty(\Omega) \to \Ci$ by 
$F(v) = \langle f_0,v \rangle_{\cd'(\Omega) \times \cd(\Omega)}$.
We claim that $F$
extends to a continuous function from $H^1_0(\Omega)$ into $\Ci$.
If $d \geq 3$, then $H^1_0(\Omega) \subset L_r(\Omega)$, where
$r = \frac{2d}{d-2}$.
So $H^1_0(\Omega) \subset L_q(\Omega)$, where $q$ is the dual 
exponent of $\tilde p$.
The last inclusion is also valid if $d = 2$.
So in any case the map $F$ extends to a continuous function from 
$H^1_0(\Omega)$ into $\Ci$.
Then the lemma follows.
\end{proof}

The next lemma is valid for a general bounded open set $\Omega$
and does not use the condition $0 \not\in \sigma(A^D)$.
It is an extension of \cite{ABenilan} Lemma~4.2.

\begin{lemma} \label{lwreg205}
Let $u \in C_0(\Omega) \cap H^1_\loc(\Omega)$ and 
$f_1,\ldots,f_d \in L_2(\Omega)$.
Let $\tilde p \in (1,\infty)$ be such that 
$\tilde p \geq \frac{2d}{d+2}$.
Further let $f_0 \in L_{\tilde p}(\Omega)$.
Suppose that $\ca u = f_0 + \sum_{k=1}^d \partial_k f_k$.
Then $u \in H^1_0(\Omega)$.
\end{lemma}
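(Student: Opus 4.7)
The plan is to construct a sequence $(v_\varepsilon)_{\varepsilon>0} \subset H^1_0(\Omega)$ that approximates $u$ in $L_2(\Omega)$ and is uniformly bounded in $H^1_0(\Omega)$; weak compactness then forces $u \in H^1_0(\Omega)$. Write $u = u_1 + i u_2$ with $u_j \in C_0(\Omega) \cap H^1_\loc(\Omega,\Ri)$, and for $\varepsilon > 0$ set $T_\varepsilon(s) = \sgn(s)(|s|-\varepsilon)^+$ and $v_\varepsilon = T_\varepsilon(u_1) + i \, T_\varepsilon(u_2)$. Because $u \in C_0(\Omega)$, the sets $A_\varepsilon = \{|u_1| > \varepsilon\}$ and $B_\varepsilon = \{|u_2| > \varepsilon\}$ are relatively compact in $\Omega$, so $v_\varepsilon$ has compact support; together with $u \in H^1_\loc(\Omega)$ this gives $v_\varepsilon \in H^1(\Omega)$ and hence $v_\varepsilon \in H^1_0(\Omega)$. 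Stampacchia's chain rule yields $\partial_k v_\varepsilon = (\partial_k u_1)\one_{A_\varepsilon} + i(\partial_k u_2)\one_{B_\varepsilon}$, and $|v_\varepsilon - u| \leq \sqrt{2}\,\varepsilon$ pointwise, so $v_\varepsilon \to u$ in $L_2(\Omega)$ as $\varepsilon \downarrow 0$.

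I would then test the identity $\ca u = f_0 + \sum_k \partial_k f_k$ against $v_\varepsilon$, which is legitimate since $v_\varepsilon$ is the $H^1$-limit of a sequence in $C_c^\infty(\Omega)$ and $u \in H^1_\loc(\Omega)$ with $\supp v_\varepsilon$ compact, obtaining
\[
\gota(u, v_\varepsilon) = \int_\Omega f_0 \, \overline{v_\varepsilon} - \sum_{k=1}^d \int_\Omega f_k \, \overline{\partial_k v_\varepsilon} ,
\]
and take real parts. Using that $a_{kl}$ is real a direct expansion gives $\RRe \sum_{k,l} \int a_{kl}(\partial_k u)\overline{\partial_l v_\varepsilon} = \int_{A_\varepsilon} \sum a_{kl} (\partial_k u_1)(\partial_l u_1) + \int_{B_\varepsilon} \sum a_{kl} (\partial_k u_2)(\partial_l u_2) \geq \mu\|\nabla v_\varepsilon\|_{L_2}^2$ by ellipticity (applied to the real vectors $\nabla u_1$ and $\nabla u_2$). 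The terms involving $b_k$, $c_0$ and the right-hand side are bounded by H\"older and Sobolev, using $u \in L_\infty(\Omega)$ and the uniform bound $\|v_\varepsilon\|_{L_q} \leq \|u\|_\infty |\Omega|^{1/q}$; the integrability hypothesis on $f_0$ enters exactly as in Lemma~\ref{lwreg202}. Putting everything together I expect an inequality $\mu \|\nabla v_\varepsilon\|_{L_2}^2 \leq C_1 + C_2 \|\nabla v_\varepsilon\|_{L_2}$ with $C_1, C_2$ independent of $\varepsilon$, hence $\sup_\varepsilon \|v_\varepsilon\|_{H^1_0(\Omega)} < \infty$. A weakly convergent subsequence in $H^1_0(\Omega)$ has weak limit equal to $u$ by the $L_2$-convergence, so $u \in H^1_0(\Omega)$.

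The main obstacle I anticipate is the lower-order term $\sum_k \int c_k (\partial_k u)\overline{v_\varepsilon}$: a priori $\nabla u$ is only $L_2$ on compact subsets of $\Omega$ with no uniform control as $\supp v_\varepsilon$ grows, and the crude bound $\|c_k\|_\infty \|\nabla u \, \one_{\supp v_\varepsilon}\|_{L_2}\|v_\varepsilon\|_{L_2}$ is not uniform in $\varepsilon$. The way around this is that after expanding $(\partial_k u)(\overline{v_\varepsilon}) = (\partial_k u_1 + i\partial_k u_2)(T_\varepsilon u_1 - iT_\varepsilon u_2)$, the real part equals $(\partial_k u_1)T_\varepsilon u_1 + (\partial_k u_2)T_\varepsilon u_2$; the off-diagonal contributions $(\partial_k u_1)T_\varepsilon u_2$ and $(\partial_k u_2)T_\varepsilon u_1$ lie entirely in the imaginary part and, because $c_k$ is real, do not enter the real part of the integral. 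Since $T_\varepsilon u_j$ vanishes outside $A_\varepsilon$ (respectively $B_\varepsilon$), one has $(\partial_k u_j)T_\varepsilon u_j = (\partial_k T_\varepsilon u_j)\,T_\varepsilon u_j$ almost everywhere, and Cauchy--Schwarz bounds this by $\|\nabla v_\varepsilon\|_{L_2}\|u\|_{L_2}$, which provides the uniform control needed to close the energy estimate.
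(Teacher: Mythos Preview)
Your proposal is correct and follows essentially the same approach as the paper: truncate $u$ to obtain compactly supported $v_\varepsilon \in H^1_0(\Omega)$, test the equation against $v_\varepsilon$, exploit the reality of $a_{kl}$ and $c_k$ to rewrite the principal and $c_k$-terms entirely in terms of $\nabla v_\varepsilon$ (via the identity $(\partial_k u_j)\,T_\varepsilon u_j = (\partial_k T_\varepsilon u_j)\,T_\varepsilon u_j$), and conclude by weak compactness. The only cosmetic difference is that the paper treats $(\RRe u - \varepsilon)^+$, $(\RRe u)^-$, $(\IIm u)^+$, $(\IIm u)^-$ one at a time with real test functions, whereas you bundle all four into one complex-valued truncation; the key observations are identical.
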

\begin{proof}
As at the end of the previous proof there exists an $M_0 > 0$ such that 
$|\int_\Omega f_0 \, \overline v| \leq M_0 \, \|v\|_{H^1(\Omega)}$ for all 
$v \in H^1_0(\Omega)$.
Set $M = M_0 + \sum_{k=1}^d \|f_k\|_2$.

Let $\varepsilon > 0$.
Set $v_\varepsilon = (\RRe u-\varepsilon)^+$.
Then $\supp v_\varepsilon \subset \Omega$ is compact.
Hence there exists an open $\Omega_1 \subset \Ri^d$ such that 
$\supp v_\varepsilon 
\subset \Omega_1 \subset \overline{\Omega_1} \subset \Omega$.
Then $v_\varepsilon \in H^1_0(\Omega_1)$.
Moreover,
\begin{eqnarray}
\lefteqn{
\sum_{k,l=1}^d \int_{\Omega_1} a_{kl} \, (\partial_k u) \, \overline{\partial_l v}
   + \sum_{k=1}^d \int_{\Omega_1} b_k \, u \, \overline{\partial_k v}
   + \sum_{k=1}^d \int_{\Omega_1} c_k \, (\partial_k u) \, \overline v
   + \int_{\Omega_1} c_0 \, u \, \overline v
} \hspace*{90mm}  \nonumber  \\*
& = & \int_{\Omega_1} f_0  \, \overline v
   + \sum_{k=1}^d \int_{\Omega_1} f_k \, \overline{\partial_k v}
\label{elwreg205;1}
\end{eqnarray}
for all $v \in C_c^\infty(\Omega_1)$.
Since $u|_{\Omega_1} \in H^1(\Omega_1)$ it follows that (\ref{elwreg205;1})
is valid for all $v \in H^1_0(\Omega_1)$.
Choosing $v = v_\varepsilon$ gives
\begin{eqnarray*}
\lefteqn{
\Big| \sum_{k,l=1}^d \int_\Omega a_{kl} \, (\partial_k u) \, \partial_l v_\varepsilon
   + \sum_{k=1}^d \int_\Omega b_k \, u \, \partial_k v_\varepsilon
   + \sum_{k=1}^d \int_\Omega c_k \, (\partial_k u) \, v_\varepsilon
   + \int_\Omega c_0 \, u \, v_\varepsilon   \Big|  
} \hspace*{60mm}  \\*
& \leq & M_0 \, \|v_\varepsilon\|_{H^1(\Omega)}
   + \sum_{k=1}^d \|f_k\|_2 \, \|\partial_k v_\varepsilon\|_2
\leq M \, \|v_\varepsilon\|_{H^1(\Omega)}
.
\end{eqnarray*}
On the other hand, 
$\partial_k v_\varepsilon 
= \partial_k ((\RRe u-\varepsilon)^+) 
= \one_{[\RRe u > \varepsilon]} \, \partial_k \RRe u$
for all $k \in \{ 1,\ldots,d \} $ by \cite{GT} Lemma~7.6.
Therefore
\begin{eqnarray*}
\lefteqn{
\RRe \sum_{k,l=1}^d \int_\Omega a_{kl} \, (\partial_k u) \, \partial_l v_\varepsilon
   + \RRe \sum_{k=1}^d \int_\Omega b_k \, u \, \partial_k v_\varepsilon
   + \RRe \sum_{k=1}^d \int_\Omega c_k \, (\partial_k u) \, v_\varepsilon
   + \RRe \int_\Omega c_0 \, u \, v_\varepsilon  
} \hspace*{10mm}  \\*
& = & 
\sum_{k,l=1}^d \int_\Omega a_{kl} \, (\partial_k v_\varepsilon) \, \partial_l v_\varepsilon
   + \RRe \sum_{k=1}^d \int_\Omega b_k \, u \, \partial_k v_\varepsilon
   + \sum_{k=1}^d \int_\Omega c_k \, (\partial_k \RRe u) \, v_\varepsilon
   + \RRe \int_\Omega c_0 \, u \, v_\varepsilon  \\
& = & \RRe \gota(v_\varepsilon)
   + \varepsilon \sum_{k=1}^d \int_\Omega (\RRe b_k) \, \partial_k v_\varepsilon
   - \sum_{k=1}^d \int_\Omega (\IIm b_k) \, (\IIm u) \, \partial_k v_\varepsilon
  \\*
& & {} \hspace*{30mm}
   + \varepsilon \int_\Omega (\RRe c_0) \, v_\varepsilon  
   - \int_\Omega (\IIm c_0) \, (\IIm u) \, v_\varepsilon  \\
& \geq & \frac{\mu}{2} \, \|v_\varepsilon\|_{H^1(\Omega)}^2 
   - \lambda_0 \, \|v_\varepsilon\|_2^2 
   - \varepsilon \, M' \, |\Omega|^{1/2} \, \|v_\varepsilon\|_{H^1(\Omega)}
   - M' \, \|u\|_2 \, \|v_\varepsilon\|_{H^1(\Omega)}
,
\end{eqnarray*}
where $M' = \|c_0\|_\infty + \sum_{k=1}^d \|b_k\|_\infty$.
Since 
$\|v_\varepsilon\|_2 
= \|(\RRe u-\varepsilon)^+\|_2 
\leq \|u\|_2 
\leq |\Omega|^{1/2} \, \|u\|_{C_0(\Omega)}$,
it follows that 
\[
\frac{\mu}{2} \, \|(\RRe u-\varepsilon)^+\|_{H^1(\Omega)}^2
\leq M'' \, \|(\RRe u-\varepsilon)^+\|_{H^1(\Omega)}
   + \lambda_0 \, |\Omega| \, \|u\|_{C_0(\Omega)}^2
\]
for all $\varepsilon \in (0,1]$, where 
$M'' = M + M' \, |\Omega|^{1/2} \, (\|u\|_{C_0(\Omega)} + 1)$.

Therefore the sequence $((\RRe u - 2^{-n})^+)_{n \in \Ni_0}$ is bounded in 
$H^1_0(\Omega)$.
Passing to a subsequence if necessary, we may assume without loss of generality
that there exists a $w \in H^1_0(\Omega)$ such that 
$\lim (\RRe u - 2^{-n})^+ = w$ weakly in $H^1_0(\Omega)$.
Then $\lim (\RRe u - 2^{-n})^+ = w$ in $L_2(\Omega)$.
But $\lim (\RRe u - 2^{-n})^+ = (\RRe u)^+$ in $L_2(\Omega)$.
So $(\RRe u)^+ = w \in H^1_0(\Omega)$.
Similarly $(\RRe u)^-, (\IIm u)^+, (\IIm u)^- \in H^1_0(\Omega)$.
So $u \in H^1_0(\Omega)$.
\end{proof}

Lemma~\ref{lwreg205} together with the condition $0 \not\in \sigma(A^D)$
gives the uniqueness in Theorem~\ref{twreg101}.

\begin{prop} \label{pwreg202.5}
For all $\varphi \in C(\Gamma)$ there exists at most one 
$u \in C(\overline \Omega) \cap H^1_\loc(\Omega)$
such that $u|_\Gamma = \varphi$ and $\ca u = 0$.
\end{prop}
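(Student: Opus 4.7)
The plan is a short reduction to Lemma~\ref{lwreg205} together with the standing hypothesis $0\notin\sigma(A^D)$.

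Suppose $u_1,u_2\in C(\overline\Omega)\cap H^1_\loc(\Omega)$ both satisfy $u_i|_\Gamma=\varphi$ and $\ca u_i=0$. Set $u:=u_1-u_2$. Since $\ca$ is linear on $H^1_\loc(\Omega)$, we have $\ca u=0$, i.e.\ $\ca u=f_0+\sum_{k=1}^d\partial_k f_k$ with $f_0=f_1=\cdots=f_d=0$. Moreover $u\in C(\overline\Omega)$ with $u|_\Gamma=0$, so $u\in C_0(\Omega)$ (recall $\Omega$ is bounded, so $\{f\in C(\overline\Omega):f|_\Gamma=0\}=C_0(\Omega)$), and still $u\in H^1_\loc(\Omega)$.

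Hence the hypotheses of Lemma~\ref{lwreg205} are satisfied (trivially, since all the $f_k$ are zero, one may take any admissible $\tilde p$). Applying the lemma gives $u\in H^1_0(\Omega)$. Combined with $\gota(u,v)=\langle\ca u,v\rangle=0$ for every $v\in C_c^\infty(\Omega)$, and therefore, by density, for every $v\in H^1_0(\Omega)$, this means $u\in D(A^D)$ and $A^Du=0$. Since by assumption $0\notin\sigma(A^D)$, in particular $\ker A^D=\{0\}$, so $u=0$ and $u_1=u_2$.

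The only non-routine ingredient here is the passage from a merely $H^1_\loc$ function vanishing on $\Gamma$ in the continuous sense to a genuine $H^1_0(\Omega)$-function; that is exactly the content of Lemma~\ref{lwreg205}, so no further difficulty arises. The argument does not require ellipticity of $\ca$ beyond what is already built into the definition of $A^D$, nor any maximum principle.
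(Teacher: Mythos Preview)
Your proof is correct and follows essentially the same route as the paper: reduce to $u\in C_0(\Omega)$, apply Lemma~\ref{lwreg205} to obtain $u\in H^1_0(\Omega)$, and then use $\ker A^D=\{0\}$ to conclude $u=0$. The paper's version is just a terser rendition of exactly this argument.
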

\begin{proof}
Let $u \in C(\overline \Omega) \cap H^1_\loc(\Omega)$
and suppose that $u|_\Gamma = 0$ and $\ca u = 0$.
Then $u \in C_0(\Omega)$.
Hence $u \in H^1_0(\Omega)$ by Lemma~\ref{lwreg205}.
Also $\ca u = 0$.
Therefore $u \in D(A^D)$ and $A^D u = 0$.
But $0 \not\in \sigma(A^D)$.
So $u = 0$.
\end{proof}

In the next proposition we use that $\Omega$ is Wiener regular.

\begin{prop} \label{pwreg203}
Let $\lambda > \lambda_0$ and $p \in (d,\infty]$.
Let $f_0 \in L_{p/2}(\Omega)$ and $f_1,\ldots,f_d \in L_p(\Omega)$.
Then there exists a unique $u \in H^1_0(\Omega) \cap C_0(\Omega)$
such that $\cb_\lambda u = f_0 + \sum_{k=1}^d \partial_k f_k$.
\end{prop}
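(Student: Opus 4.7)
The plan is to construct $u$ via Stampacchia's theorem applied to the operator $\cb_\lambda$, upgrade the resulting classical solution to membership in $H^1_0(\Omega)$ using Lemma~\ref{lwreg205}, and deduce uniqueness from the coercivity of $\gotb_\lambda$ for $\lambda > \lambda_0$. For existence, I would apply Stampacchia's Th\'eor\`eme~10.2 in \cite{Stam2} to $\cb_\lambda$: its coefficients are real and bounded, its principal part is elliptic with constant $\mu$, and the only zero-order term is the positive constant $\lambda$, so Stampacchia's sign condition (\ref{eSwreg1;3.1}) is satisfied with $\mu' = \lambda$. Combined with the Wiener regularity of $\Omega$, the continuous boundary datum $0$, and the integrability assumptions $f_0 \in L_{p/2}(\Omega)$, $f_k \in L_p(\Omega)$ with $p > d$, the theorem produces $u \in C(\overline \Omega) \cap H^1_\loc(\Omega)$ satisfying $u|_\Gamma = 0$ and $\cb_\lambda u = f_0 + \sum_{k=1}^d \partial_k f_k$; in particular $u \in C_0(\Omega)$.

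To promote $u$ to $H^1_0(\Omega)$, I would rewrite the equation in the form $\ca u = \tilde f_0 + \sum_{k=1}^d \partial_k \tilde f_k$ with $\tilde f_0 = f_0 + (c_0 - \lambda) u$ and $\tilde f_k = f_k - b_k u$. Since $u \in L_\infty(\Omega)$ (as $u \in C_0(\Omega)$) and $b_k, c_0 \in L_\infty(\Omega)$, these new data satisfy $\tilde f_0 \in L_{p/2}(\Omega)$ and $\tilde f_k \in L_p(\Omega) \subset L_2(\Omega)$; the integrability hypothesis $\tilde p \geq 2d/(d+2)$ of Lemma~\ref{lwreg205} is met with $\tilde p = p/2$, because $p > d \geq 2$ forces $p/2 > 2d/(d+2)$. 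That lemma therefore gives $u \in H^1_0(\Omega)$. For uniqueness, any two solutions $u_1, u_2 \in H^1_0(\Omega) \cap C_0(\Omega)$ have difference $v = u_1 - u_2 \in H^1_0(\Omega)$ with $\gotb_\lambda(v, \phi) = 0$ for all $\phi \in C_c^\infty(\Omega)$, and hence for all $\phi \in H^1_0(\Omega)$ by density; taking $\phi = v$ and invoking the coercivity estimate $\RRe \gotb_\lambda(v) \geq (\mu/2) \|v\|_{H^1(\Omega)}^2$ (valid since $\lambda > \lambda_0$) then forces $v = 0$.

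The main obstacle is the correct invocation of Stampacchia's theorem: one has to verify that the cited version genuinely delivers a solution of the inhomogeneous equation $\cb_\lambda u = f_0 + \sum_{k=1}^d \partial_k f_k$ for the prescribed integrability classes that extends continuously to $\overline\Omega$, with the Wiener regularity of $\Omega$ ensuring that this extension vanishes on $\Gamma$. Once this black box is in hand, the conversion back to $\ca$ and the application of Lemma~\ref{lwreg205} are routine bookkeeping.
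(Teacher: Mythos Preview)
Your proposal is correct and follows essentially the same route as the paper. The paper invokes \cite{GT} Theorem~8.31 rather than Stampacchia's Th\'eor\`eme~10.2 for the existence of the classical solution of the inhomogeneous problem (which resolves the ``black box'' concern you flag), and then simply cites Lemma~\ref{lwreg205} without spelling out the rewriting $\cb_\lambda u = f_0 + \sum_k \partial_k f_k \Longrightarrow \ca u = \tilde f_0 + \sum_k \partial_k \tilde f_k$ that you make explicit; for uniqueness the paper points to Proposition~\ref{pwreg202.5}, whose proof amounts to exactly the coercivity argument you give once one replaces $\ca$ by $\cb_\lambda$ and $\ker A^D = \{0\}$ by the coercivity of $\gotb_\lambda$. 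So your version is a slightly more detailed rendering of the paper's argument rather than a genuinely different approach.
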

\begin{proof}
Since $a_{kl}$ and $c_k$ are real valued for all $k,l \in \{ 1,\ldots,d \} $
we may assume that $f_0,\ldots,f_d$ are real valued.
By \cite{GT} Theorem~8.31 there exists a unique $u \in C(\overline \Omega) \cap H^1_\loc(\Omega)$
such that $\cb_\lambda u = f_0 + \sum_{k=1}^d \partial_k f_k$ and $u|_\Gamma = 0$.
Then $u \in C_0(\Omega)$ and the existence follows from Lemma~\ref{lwreg205}.
The uniqueness follows from Proposition~\ref{pwreg202.5}.
\end{proof}

\begin{cor} \label{cwreg206}
Let $\lambda > \lambda_0$ and $p \in (d,\infty]$.
Let $f_0 \in L_{p/2}(\Omega)$ and $f_1,\ldots,f_d \in L_p(\Omega)$.
Let $u \in H^1_0(\Omega)$ and suppose that $\cb_\lambda u = f_0 + \sum_{k=1}^d \partial_k f_k$.
Then $u \in C_0(\Omega)$.
\end{cor}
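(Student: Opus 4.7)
The plan is to deduce this as a short uniqueness-plus-regularity argument on top of Proposition~\ref{pwreg203}. Concretely, Proposition~\ref{pwreg203} already produces a solution in $H^1_0(\Omega) \cap C_0(\Omega)$ with the prescribed right-hand side; it remains to observe that any other $H^1_0(\Omega)$-solution must coincide with it.

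First I would apply Proposition~\ref{pwreg203} to the same data $f_0, f_1, \ldots, f_d$ to obtain $\tilde u \in H^1_0(\Omega) \cap C_0(\Omega)$ with $\cb_\lambda \tilde u = f_0 + \sum_{k=1}^d \partial_k f_k$. Setting $w = u - \tilde u \in H^1_0(\Omega)$, one has $\cb_\lambda w = 0$ in $\cd'(\Omega)$; unwinding this, $\gotb_\lambda(w,v) = 0$ for all $v \in C_c^\infty(\Omega)$. Since $C_c^\infty(\Omega)$ is dense in $H^1_0(\Omega)$ and $\gotb_\lambda$ is continuous on $H^1(\Omega) \times H^1(\Omega)$, the identity extends to every $v \in H^1_0(\Omega)$, and in particular $\gotb_\lambda(w,w) = 0$.

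Finally I would invoke the coercivity estimate displayed just before Lemma~\ref{lwreg202}, which gives $\RRe \gotb_{\lambda_0}(w) \geq \frac{\mu}{2} \|w\|_{H^1(\Omega)}^2$; for $\lambda > \lambda_0$ the extra term $(\lambda - \lambda_0) \|w\|_2^2$ is nonnegative, so the same bound holds with $\lambda$ in place of $\lambda_0$. Combined with $\gotb_\lambda(w,w) = 0$ this forces $w = 0$, whence $u = \tilde u \in C_0(\Omega)$. There is really no serious obstacle: the corollary just repackages the existence result as a regularity statement for the (unique) $H^1_0$-solution, and the only thing to verify is the trivial extension of coercivity from $\lambda_0$ to all $\lambda > \lambda_0$.
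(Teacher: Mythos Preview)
Your proof is correct and follows essentially the same route as the paper's own argument: apply Proposition~\ref{pwreg203} to get a solution $\tilde u \in H^1_0(\Omega) \cap C_0(\Omega)$, test $\gotb_\lambda(u-\tilde u,\cdot)=0$ against $u-\tilde u$ by density, and conclude $u=\tilde u$ from the coercivity of $\gotb_\lambda$. The only cosmetic difference is that you spell out the passage from $\lambda_0$ to $\lambda$ in the coercivity estimate, which the paper leaves implicit.
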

\begin{proof}
By Proposition~\ref{pwreg203} there exists a $\tilde u \in H^1_0(\Omega) \cap C_0(\Omega)$
such that $\cb_\lambda \tilde u = f_0 + \sum_{k=1}^d \partial_k f_k$.
Then $\cb_\lambda(u - \tilde u) = 0$.
So $\gotb_\lambda(u - \tilde u,v) = 0$ first for all $v \in C_c^\infty(\Omega)$ and 
then by density for all $v \in H^1_0(\Omega)$.
Choose $v = u - \tilde u$.
Then $\frac{\mu}{2} \, \|u - \tilde u\|_{H^1(\Omega)}^2 \leq \RRe \gotb_\lambda(u - \tilde u) = 0$.
So $u = \tilde u \in C_0(\Omega)$.
\end{proof}

We next wish to add the other lower order terms.

\begin{prop} \label{pwreg209}
There exists a $c > 0$ such that 
for all $\Phi \in C^1(\Ri^d)$ there exists a unique
$u \in H^1(\Omega) \cap C(\overline \Omega)$ such that 
$u|_\Gamma = \Phi|_\Gamma$ and $\ca u = 0$.
Moreover,
\[
\|u\|_{C(\overline \Omega)}
\leq c \, \|\Phi|_\Gamma\|_{C(\Gamma)}
.  \]
\end{prop}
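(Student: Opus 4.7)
Uniqueness is immediate from Proposition~\ref{pwreg202.5}. The plan for existence is to reduce the equation $\ca u = 0$ to one for the auxiliary operator $\cb_\lambda$ (where Stampacchia's theorem applies, as already exploited in Proposition~\ref{pwreg203}), and then reinstate the missing lower order terms $b_k$ and $c_0$ via a Fredholm alternative argument on $C_0(\Omega)$.

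Fix $\lambda > \lambda_0$. Since $\cb_\lambda$ has real coefficients and satisfies (\ref{eSwreg1;3.1}) with $\mu' = \lambda$, and $\Omega$ is Wiener regular, \cite{GT} Theorem~8.31 produces a unique classical solution $w \in C(\overline\Omega) \cap H^1_\loc(\Omega)$ of $\cb_\lambda w = 0$, $w|_\Gamma = \Phi|_\Gamma$, with $\|w\|_{C(\overline\Omega)} \leq \|\Phi|_\Gamma\|_{C(\Gamma)}$ by the weak maximum principle for $\cb_\lambda$. The variational solution of the same problem (furnished by Lax--Milgram applied to $\cb_\lambda$, which is coercive for $\lambda > \lambda_0$) is continuous up to $\partial\Omega$ by Corollary~\ref{cwreg206} applied to its difference with $\Phi$, so by uniqueness it coincides with $w$; hence $w \in H^1(\Omega)$ and $w - \Phi \in H^1_0(\Omega)$. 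Writing $u = w + z$, the required $u$ corresponds to $z \in H^1_0(\Omega) \cap C_0(\Omega)$ satisfying $\ca z = -\ca w = G_0 + \sum_{k=1}^d \partial_k G_k$, where $G_0 = -(c_0 - \lambda) w$ and $G_k = b_k w$ lie in $L_\infty(\Omega)$ and $\|G_0\|_\infty + \sum_{k=1}^d \|G_k\|_\infty \leq C\,\|\Phi|_\Gamma\|_{C(\Gamma)}$.

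Rewrite $\ca z = G_0 + \sum_k \partial_k G_k$ in the equivalent form $\cb_\lambda z = G_0 + \sum_k \partial_k G_k + \sum_k \partial_k(b_k z) - (c_0 - \lambda)\, z$. Define $K\colon C_0(\Omega) \to C_0(\Omega)$ by letting $Kv$ be the unique element of $H^1_0(\Omega) \cap C_0(\Omega)$ with $\cb_\lambda(Kv) = \sum_k \partial_k(b_k v) - (c_0 - \lambda)\, v$; this exists by Proposition~\ref{pwreg203}. Similarly let $z_0 \in H^1_0(\Omega) \cap C_0(\Omega)$ solve $\cb_\lambda z_0 = G_0 + \sum_k \partial_k G_k$, and note that a Stampacchia-type $L_\infty$-estimate gives $\|z_0\|_{C(\overline\Omega)} \leq C'\,\|\Phi|_\Gamma\|_{C(\Gamma)}$. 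The problem is then the operator equation $(I - K)\, z = z_0$ on $C_0(\Omega)$. Injectivity of $I - K$ is easy: if $v = Kv$ with $v \in C_0(\Omega)$, then $v \in H^1_0(\Omega) \cap C_0(\Omega)$ and $\ca v = 0$, so $v \in D(A^D)$ with $A^D v = 0$, forcing $v = 0$ since $0 \notin \sigma(A^D)$.

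The principal obstacle is the compactness of $K$ on $C_0(\Omega)$. I expect this to follow from De Giorgi--Nash--Moser H\"older regularity for the divergence-form operator $\cb_\lambda$ with real bounded measurable coefficients: the right hand side of $\cb_\lambda(Kv) = h_0 + \sum_k \partial_k h_k$ satisfies $\|h_j\|_\infty \leq C\,\|v\|_{C(\overline\Omega)}$, and should therefore yield $\alpha \in (0,1)$ and a constant $c'$ with $\|Kv\|_{C^\alpha(\overline\Omega)} \leq c'\,\|v\|_{C(\overline\Omega)}$, whence compactness by Arzel\`a--Ascoli. The Fredholm alternative then produces $z = (I - K)^{-1} z_0 \in C_0(\Omega)$, which lies in $H^1_0(\Omega) \cap C_0(\Omega)$ since $z = Kz + z_0$ belongs to $H^1_0(\Omega)$. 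Combining $\|w\|_{C(\overline\Omega)} \leq \|\Phi|_\Gamma\|_{C(\Gamma)}$ with $\|z\|_{C(\overline\Omega)} \leq \|(I-K)^{-1}\|\,\|z_0\|_{C(\overline\Omega)} \leq C''\,\|\Phi|_\Gamma\|_{C(\Gamma)}$ delivers the required bound on $u = w + z$.
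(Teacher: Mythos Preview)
Your reduction to a Fredholm problem on $C_0(\Omega)$ is natural, but the compactness of $K$ is where the argument stalls. De Giorgi--Nash--Moser theory gives \emph{interior} H\"older estimates; the global bound $\|Kv\|_{C^\alpha(\overline\Omega)} \leq c' \|v\|_\infty$ you invoke would require H\"older regularity up to the boundary, and for a merely Wiener regular $\Omega$ this is not available. Wiener regularity is the qualitative divergence of a capacity integral and carries no uniform rate; one can construct Wiener regular domains on which solutions of $\cb_\lambda u = f_0 + \sum_k \partial_k f_k$ with bounded data and $u|_\Gamma = 0$ fail to lie in $C^\alpha(\overline\Omega)$ for any fixed $\alpha > 0$. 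What Arzel\`a--Ascoli actually needs is equicontinuity of $\{Kv : \|v\|_\infty \leq 1\}$ on all of $\overline\Omega$; near $\Gamma$ this amounts to a uniform modulus of continuity at boundary points, which is plausibly obtainable from quantitative Wiener-criterion boundary estimates, but that is a substantial piece of regularity theory in its own right, not a consequence of DGNM, and you have not supplied it.

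The paper avoids this issue altogether. Instead of a Fredholm argument on $C_0(\Omega)$, it first solves the shifted problem $\ca_\lambda \tilde u = 0$, $\tilde u|_\Gamma = \Phi|_\Gamma$ for one large fixed $\lambda$ (Lemma~\ref{lwreg210}): with $\tilde u$ the (a~priori only $H^1$) variational solution and $w$ your Stampacchia solution of $\cb_\lambda w = 0$, one has $w - \tilde u = R_\lambda \tilde u$ for the operator $R_\lambda$ of Lemma~\ref{lwreg207}, and an explicit $L_p$-bootstrap through exponents $\frac{1}{p_n} = \frac{1}{2} - \frac{n}{4d}$ shows $\tilde u \in C(\overline\Omega)$ with $\|\tilde u\|_\infty \leq c\,\|\Phi|_\Gamma\|_{C(\Gamma)}$; the factor $(\lambda - \lambda_0)^{-1/4}$ in Lemma~\ref{lwreg207}\ref{lwreg207-1.5} is what makes the iteration close for large~$\lambda$. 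The shift is then removed via the identity $u = \tilde u + \lambda\,(A^D)^{-1}\tilde u$ together with Corollaries~\ref{cwreg208} and~\ref{cwreg211}. No compactness in $C_0(\Omega)$ enters, and the only boundary regularity used is the continuity already secured in Proposition~\ref{pwreg203} and Corollary~\ref{cwreg206}.
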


For the proof we need some lemmas.
In the next lemma we introduce a parameter $\delta$ in order
to avoid duplication of the proof.

\begin{lemma} \label{lwreg207}
Fix $\delta \in [0,\lambda_0 + 1]$.
\mbox{}
\begin{tabel}
\item \label{lwreg207-1}
For all $f \in L_2(\Omega)$ and $\lambda > \lambda_0$ 
there exists a unique $u \in H^1_0(\Omega)$ such that
\begin{equation}
\gotb_\lambda(u,v) 
= \sum_{k=1}^d (b_k \, f, \partial_k v)_{L_2(\Omega)} 
   + \, ((c_0 - \delta \, \one_\Omega) \, f, v)_{L_2(\Omega)}
\label{elwreg207;1}
\end{equation}
for all $v \in H^1_0(\Omega)$.
\end{tabel}
For all $\lambda > \lambda_0$ define 
$R_\lambda \colon L_2(\Omega) \to L_2(\Omega)$ by $R_\lambda f = u$, where 
$u \in H^1_0(\Omega)$ is as in~{\rm (\ref{elwreg207;1})}.
\begin{tabel}
\setcounter{teller}{1}
\item \label{lwreg207-1.5}
There exists a $c_1 > 0$ such that 
\[
\|R_\lambda f\|_{L_q(\Omega)} \leq c_1 \, (\lambda - \lambda_0)^{-1/4} \, \|f\|_{L_2(\Omega)}
\]
for all $\lambda > \lambda_0$ and $f \in L_2(\Omega)$,
where $\frac{1}{q} = \frac{1}{2} - \frac{1}{4d}$.
\item \label{lwreg207-2}
There exists a $c_2 \geq 1$ such that 
\[
\|R_\lambda f\|_{L_q(\Omega)} \leq c_2 \, \|f\|_{L_p(\Omega)}
\]
for all $\lambda \in [\lambda_0 + 1,\infty)$,
$p,q \in [2,\infty]$ and $f \in L_p(\Omega)$ with $\frac{1}{q} = \frac{1}{p} - \frac{1}{4d}$.
\item \label{lwreg207-3}
If $\lambda > \lambda_0$, 
$p \in (d,\infty]$ and $f \in L_p(\Omega)$, then $R_\lambda f \in C_0(\Omega)$.
\end{tabel}
\end{lemma}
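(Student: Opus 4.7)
The plan is to address (i)--(iv) in turn, each leaning on the previous. For (i), the form $\gotb_\lambda$ is continuous on $H^1_0(\Omega) \times H^1_0(\Omega)$ and, by the defining property of $\lambda_0$, satisfies
\[
\RRe \gotb_\lambda(v,v) = \RRe \gotb_{\lambda_0}(v,v) + (\lambda - \lambda_0)\,\|v\|_{L_2(\Omega)}^2 \geq \tfrac{\mu}{2}\|v\|_{H^1(\Omega)}^2
\]
for every $\lambda > \lambda_0$. Since $b_k, c_0 \in L_\infty(\Omega)$, the antilinear functional on the right of (\ref{elwreg207;1}) is bounded on $H^1_0(\Omega)$ by a fixed multiple of $\|f\|_{L_2}$, and Lax--Milgram supplies the unique $u = R_\lambda f$.

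For (ii), I would test (\ref{elwreg207;1}) with $v = u$, take real parts, and use Cauchy--Schwarz together with Young's inequality (absorbing gradient terms into the coercive left-hand side) to extract the two independent a priori bounds
\[
\|u\|_{H^1(\Omega)} \leq C\|f\|_{L_2(\Omega)}, \qquad \|u\|_{L_2(\Omega)} \leq C(\lambda-\lambda_0)^{-1/2}\|f\|_{L_2(\Omega)},
\]
both uniform in $\lambda > \lambda_0$. The Gagliardo--Nirenberg inequality $\|u\|_{L_q(\Omega)} \leq C\|u\|_{L_2(\Omega)}^{3/4}\|\nabla u\|_{L_2(\Omega)}^{1/4}$, whose exponents check out at $\theta = 1/4$ to give $\frac{1}{q} = \frac{1}{2} - \frac{1}{4d}$, then yields $\|u\|_{L_q} \leq C(\lambda-\lambda_0)^{-3/8}\|f\|_{L_2}$ whenever $\lambda - \lambda_0 \geq 1$. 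For $\lambda - \lambda_0 \in (0,1]$ the factor $(\lambda-\lambda_0)^{-1/4}$ already exceeds $1$, so the bare estimate $\|u\|_{L_q} \leq C\|u\|_{H^1} \leq C\|f\|_{L_2}$ gives the same conclusion. Taking the larger constant yields (ii) with the stated exponent $-1/4$.

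For (iii), the strategy is Riesz--Thorin interpolation between two endpoints. Specialising (ii) to $\lambda \geq \lambda_0 + 1$ gives the first endpoint $\|R_\lambda\|_{L_2 \to L_{q_1}} \leq c_1$, where $\frac{1}{q_1} = \frac{1}{2} - \frac{1}{4d}$. For the second endpoint I take $f \in L_{4d}(\Omega)$ and rewrite (\ref{elwreg207;1}) as $\cb_\lambda u = (c_0-\delta)f + \sum_k \partial_k(-b_k f)$ with $(c_0-\delta)f \in L_{2d}$ and $-b_k f \in L_{4d}$; since $4d > d$, Proposition~\ref{pwreg203} produces a $\tilde u \in H^1_0(\Omega) \cap C_0(\Omega)$ solving this equation, and the $L^\infty$-estimate of \cite{GT}, Theorem~8.16 (applicable because $\gotb_\lambda$ satisfies (\ref{eSwreg1;3}) for every $\lambda \geq 0$), gives $\|\tilde u\|_{L_\infty} \leq c\|f\|_{L_{4d}}$ with constant uniform in $\lambda \geq \lambda_0 + 1$. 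Uniqueness from (i) identifies $\tilde u = R_\lambda f$, so $\|R_\lambda\|_{L_{4d} \to L_\infty} \leq c$. A direct check shows that the line $\frac{1}{p_\theta} = \frac{1-\theta}{2} + \frac{\theta}{4d}$, $\frac{1}{q_\theta} = \frac{1-\theta}{q_1}$ satisfies $\frac{1}{q_\theta} = \frac{1}{p_\theta} - \frac{1}{4d}$ for every $\theta \in [0,1]$ and sweeps out precisely $p_\theta \in [2,4d]$, $q_\theta \in [q_1,\infty]$; Riesz--Thorin therefore delivers the full family of bounds in (iii) with a uniform constant.

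For (iv), given $f \in L_p$ with $p > d$, Proposition~\ref{pwreg203} applied to $f_0 = (c_0-\delta)f \in L_{p/2}$ and $f_k = -b_k f \in L_p$ produces a $\tilde u \in H^1_0(\Omega) \cap C_0(\Omega)$ satisfying (\ref{elwreg207;1}), and uniqueness from (i) gives $\tilde u = R_\lambda f$, so $R_\lambda f \in C_0(\Omega)$. The main technical delicacy lies in (iii), in ensuring the Gilbarg--Trudinger $L^\infty$-constant at the upper endpoint is uniform in $\lambda$; this uniformity is precisely the payoff of having reduced the analysis to the submarkov form $\gotb_\lambda$.
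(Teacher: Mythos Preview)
Your arguments for (a), (b) and (d) are essentially the paper's: Lax--Milgram for (a); the energy estimate plus interpolation between $L_2$ and a Sobolev exponent for (b) (the paper writes $\|u\|_{L_q}\le\|u\|_{L_2}^{1/2}\|u\|_{L_{q_1}}^{1/2}$ with $1/q_1=1/2-1/(2d)$ rather than quoting Gagliardo--Nirenberg, but that is the same computation); and Corollary~\ref{cwreg206} for (d).

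Part (c) is where you diverge. The paper does \emph{not} try to obtain a $\lambda$-uniform $L_{4d}\!\to\! L_\infty$ bound from an elliptic estimate. It fixes the single value $\lambda=\lambda_0+1$, uses Corollary~\ref{cwreg206} together with the closed graph theorem to get $\|R_{\lambda_0+1}\|_{L_{4d}\to C_0(\Omega)}<\infty$ (no explicit constant), and then transfers to arbitrary $\lambda\ge\lambda_0+1$ via the identity
\[
R_\lambda=\Big(I-(\lambda-\lambda_0-1)(B^D+\lambda I)^{-1}\Big)R_{\lambda_0+1}
\]
combined with the Gaussian-kernel bound $\|(B^D+\lambda I)^{-1}\|_{\infty\to\infty}\le c\,\lambda^{-1}$. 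The interpolation step is then the same as yours. This route is heavier (it imports kernel bounds for the semigroup of $B^D$) but is entirely insulated from any question of how constants depend on~$\lambda$.

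Your route is more elementary in spirit, but the sentence ``the $L^\infty$-estimate of \cite{GT}, Theorem~8.16 \ldots\ with constant uniform in $\lambda\ge\lambda_0+1$'' is a gap as written. The constant in that theorem depends on the structural bound $\nu$, which in turn involves the zeroth-order coefficient---here precisely~$\lambda$---so the theorem \emph{as stated} does not give uniformity. The claim is nonetheless correct, and your instinct that the submarkov structure is the reason is right: in the De~Giorgi iteration behind the bound, testing against $(u-k)^+$ for $k\ge 0$ produces the term $\lambda\int_\Omega u\,(u-k)^+\ge 0$, which can be discarded; the remaining argument then runs with constants depending only on $\mu$, $\|c_k\|_\infty$, $d$ and $|\Omega|$. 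If you want to keep your semigroup-free approach you should insert this observation explicitly (or cite a version of the global bound in which the favourable sign of the zeroth-order term is tracked), rather than pointing to \cite{GT} Theorem~8.16 alone.
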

\begin{proof}
`\ref{lwreg207-1}'.
This follows from the Lax--Milgram theorem.

`\ref{lwreg207-1.5}'.
Define $M = \|c_0 - \delta \, \one_\Omega\|_{L_\infty(\Omega)} + \sum_{k=1}^d \|b_k\|_{L_\infty(\Omega)}$.
Let $\lambda > \lambda_0$, $f \in L_2(\Omega)$ and set $u = R_\lambda f$.
Then 
\begin{eqnarray*}
\frac{\mu}{2} \, \|u\|_{H^1(\Omega)}^2 + (\lambda - \lambda_0) \|u\|_{L_2(\Omega)}^2
& \leq & \RRe \gotb_{\lambda_0}(u) + (\lambda - \lambda_0) \|u\|_{L_2(\Omega)}^2  \\
& = & \RRe \gotb_\lambda(u)  \\
& = & \RRe \sum_{k=1}^d (b_k \, f, \partial_k u)_{L_2(\Omega)} 
   + \RRe ((c_0 - \delta \, \one_\Omega) \, f, u)_{L_2(\Omega)}  \\
& \leq & M \, \|f\|_{L_2(\Omega)} \, \|u\|_{H^1(\Omega)}
.  
\end{eqnarray*}
So 
$\|u\|_{H^1(\Omega)} 
\leq 2 \mu^{-1} \, M \, \|f\|_{L_2(\Omega)}$
and 
\[
\|R_\lambda f\|_{L_2(\Omega)}
= \|u\|_{L_2(\Omega)}
\leq \sqrt{\frac{2}{\mu (\lambda - \lambda_0)} } \, M \, \|f\|_{L_2(\Omega)}
.  \]
By the Sobolev embedding theorem there exists a $c_1 > 0$ such that 
$\|v\|_{L_{q_1}(\Omega)} \leq c_1 \, \|v\|_{H^1(\Omega)}$ for all $v \in H^1_0(\Omega)$,
where $\frac{1}{q_1} = \frac{1}{2} - \frac{1}{2d}$.
(The extra factor $2$ is to avoid a separate case for $d=2$.)
Then 
$\|R_\lambda f\|_{L_{q_1}(\Omega)} 
\leq 2 \mu^{-1} \, c_1 \, M \, \|f\|_{L_2(\Omega)}$.
Hence 
\[
\|R_\lambda f\|_{L_q(\Omega)} 
\leq \|R_\lambda f\|_{L_2(\Omega)}^{1/2} \, \|R_\lambda f\|_{L_{q_1}(\Omega)}^{1/2}
\leq c_2 \, (\lambda - \lambda_0)^{-1/4} \, \|f\|_{L_2(\Omega)}
,  \]
where $c_2 = (2/\mu)^{3/4} \, c_1^{1/2} \, M$.

`\ref{lwreg207-2}'.
Apply Corollary~\ref{cwreg206} with $p = 4d$ and $\lambda = \lambda_0 + 1$.
It follows that $R_{\lambda_0 + 1} f \in C_0(\Omega)$ for all $f \in L_p(\Omega)$.
Clearly the map $R_{\lambda_0 + 1} |_{L_p(\Omega)} \colon L_p(\Omega) \to C_0(\Omega)$
has a closed graph.
Hence it is continuous.
In particular, there exists a $c_3 > 0$ such that 
$\|R_{\lambda_0 + 1} f\|_{L_\infty(\Omega)} 
= \|R_{\lambda_0 + 1} f\|_{C_0(\Omega)} 
\leq c_3 \, \|f\|_{L_p(\Omega)}$
for all $f \in L_p(\Omega)$.

Let $\lambda \geq \lambda_0 + 1$ and $f \in L_2(\Omega)$.
Write $u = R_\lambda f$ and $u_0 = R_{\lambda_0 + 1} f$.
Then $\gotb_\lambda(u,v) = \gotb_{\lambda_0 + 1}(u_0,v)$
and therefore 
$\gotb_\lambda(u - u_0, v) = - (\lambda - \lambda_0 - 1) \, (u,v)_{L_2(\Omega)}$
for all $v \in H^1_0(\Omega)$.
Hence $u - u_0 \in D(B^D)$ and $(B^D + \lambda \, I) (u - u_0) = - (\lambda - \lambda_0 - 1) \, u_0$.
Consequently 
\[
R_\lambda 
= \Big( I - (\lambda - \lambda_0 - 1) \, (B^D + \lambda \, I)^{-1} \Big) R_{\lambda_0 + 1}
\]
for all $\lambda \geq \lambda_0 + 1$.
Since the semigroup generated by $-B^D$ has Gaussian bounds, there exists a
$c_4 \geq 1$ such that 
 $\|(B^D + \lambda \, I)^{-1}\|_{\infty \to \infty} \leq c_4 \, \lambda^{-1}$ 
for all $\lambda \geq \lambda_0 + 1$.
Then $\|R_\lambda f\|_{L_\infty(\Omega)} \leq 2 c_3 \, c_4 \, \|f\|_{L_p(\Omega)}$
for all $\lambda \geq \lambda_0 + 1$ and $f \in L_p(\Omega)$.

Finally let $p' \in (2,4d)$ and let $q' \in (2,\infty)$ be such that 
$\frac{1}{q'} = \frac{1}{p'} - \frac{1}{4d}$.
There exists a $\theta \in (0,1)$ such that 
$\frac{1}{p'} = \frac{1-\theta}{2} + \frac{\theta}{p}$.
Then $\frac{1}{q'} = \frac{1-\theta}{q}$, where 
$\frac{1}{q} = \frac{1}{2} - \frac{1}{4d}$.
Let $c_1 > 0$ be as in Statement~\ref{lwreg207-1.5}.
The operator $R_\lambda$ is bounded from $L_2(\Omega)$ into 
$L_q(\Omega)$ with norm at most~$c_1$ by Statement~\ref{lwreg207-1.5},
and we just proved that the operator $R_\lambda$ is bounded from $L_p(\Omega)$ into 
$L_\infty(\Omega)$ with norm at most~$2 c_3 \, c_4$.
Hence by interpolation the operator $R_\lambda$ is bounded from 
$L_{p'}(\Omega)$ into $L_{q'}(\Omega)$ with norm bounded by 
$c_1^{1-\theta} \, (2 c_3 \, c_4)^\theta \leq c_1 + 2 c_3 \, c_4$,
which gives Statement~\ref{lwreg207-2}.

`\ref{lwreg207-3}'.
This is a special case of Corollary~\ref{cwreg206}.
\end{proof}

The main step in the proof of Proposition~\ref{pwreg209} is 
the next lemma.

\begin{lemma} \label{lwreg210}
There exist $\lambda > \lambda_0$ and $c > 0$ such that 
for all $\Phi \in C^1(\overline \Omega) \cap H^1(\Omega)$ there exists a unique
$u \in H^1(\Omega) \cap C(\overline \Omega)$ such that 
$u|_\Gamma = \Phi|_\Gamma$ and $\ca_\lambda u = 0$.
Moreover,
\[
\|u\|_{C(\overline \Omega)}
\leq c \, \|\Phi|_\Gamma\|_{C(\Gamma)}
.  \]
\end{lemma}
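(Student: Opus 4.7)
My plan is to write $\ca_\lambda = \cb_\lambda + (\ca_\lambda - \cb_\lambda)$ and reduce solving $\ca_\lambda u = 0$ to a $\cb_\lambda$-harmonic extension problem (handled by Stampacchia and Gilbarg--Trudinger Theorem~8.31, as in the proof of Proposition~\ref{pwreg203}) combined with a Neumann series inversion involving the operator $R_\lambda$ from Lemma~\ref{lwreg207} (used with $\delta = 0$). A direct distributional reading of the defining identity for $R_\lambda$ yields
\[
\cb_\lambda R_\lambda f = -\sum_{k=1}^d \partial_k(b_k f) + c_0 f = (\ca_\lambda - \cb_\lambda) f,
\]
so the equation $\ca_\lambda u = 0$ is equivalent to $\cb_\lambda(u + R_\lambda u) = 0$.

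\textbf{Constructing the candidate.} First I would invoke Gilbarg--Trudinger Theorem~8.31 applied to the real-coefficient operator $\cb_\lambda$, which satisfies the Stampacchia condition~(\ref{eSwreg1;3.1}) for every $\lambda > 0$: this produces a unique $U_0 \in C(\overline\Omega) \cap H^1_\loc(\Omega)$ with $\cb_\lambda U_0 = 0$ and $U_0|_\Gamma = \Phi|_\Gamma$, and the weak maximum principle yields $\|U_0\|_{C(\overline\Omega)} \leq c' \|\Phi|_\Gamma\|_{C(\Gamma)}$. I would then upgrade $U_0$ to $H^1(\Omega)$: since $\Phi \in C^1(\overline\Omega)$, one has $\cb_\lambda \Phi = f_0 + \sum_k \partial_k f_k$ with $f_0, f_k \in L_\infty(\Omega)$, so Proposition~\ref{pwreg203} supplies some $W' \in H^1_0(\Omega) \cap C_0(\Omega)$ solving $\cb_\lambda W' = -\cb_\lambda \Phi$, and the uniqueness clause of Theorem~8.31 forces $W' = U_0 - \Phi$, whence $U_0 \in H^1(\Omega)$.

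\textbf{Main obstacle: inverting $I + R_\lambda$.} The hard part will be to show that, for $\lambda$ sufficiently large, $I + R_\lambda$ is invertible on $C(\overline\Omega)$; note that $R_\lambda$ is bounded on $C(\overline\Omega)$ with image in $C_0(\Omega)$ by Lemma~\ref{lwreg207}(d). I would iterate the smoothing bounds of Lemma~\ref{lwreg207}: part~(c) raises $L_p$-integrability to $L_q$ with $\tfrac{1}{q} = \tfrac{1}{p} - \tfrac{1}{4d}$, so after $N = 2d$ applications of $R_\lambda$ one passes from $L_2(\Omega)$ all the way to $L_\infty(\Omega)$; using part~(b) for the first step, which carries the factor $(\lambda - \lambda_0)^{-1/4}$, and part~(c) for the remaining $N - 1$ steps should yield
\[
\|R_\lambda^N f\|_{L_\infty(\Omega)} \leq C (\lambda - \lambda_0)^{-1/4} \|f\|_{L_\infty(\Omega)}
\]
with $C$ independent of $\lambda$. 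Fixing $\lambda > \lambda_0$ so large that $C(\lambda - \lambda_0)^{-1/4} < 1$ then makes the spectral radius of $R_\lambda$ on $C(\overline\Omega)$ strictly less than one, so the Neumann series $(I + R_\lambda)^{-1} = \sum_{n \geq 0}(-R_\lambda)^n$ converges in $\cl(C(\overline\Omega))$.

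\textbf{Verification and uniqueness.} Finally I would set $u = (I + R_\lambda)^{-1} U_0 \in C(\overline\Omega)$. The identity $u = U_0 - R_\lambda u$, combined with $R_\lambda u \in C_0(\Omega) \cap H^1_0(\Omega)$ and $U_0 \in C(\overline\Omega) \cap H^1(\Omega)$, gives $u|_\Gamma = \Phi|_\Gamma$ and $u \in H^1(\Omega) \cap C(\overline\Omega)$, while $\cb_\lambda U_0 = 0$ together with $\cb_\lambda R_\lambda u = (\ca_\lambda - \cb_\lambda) u$ yields $\ca_\lambda u = 0$; the a priori estimate then follows with $c = c' \|(I + R_\lambda)^{-1}\|_{\cl(C(\overline\Omega))}$. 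For uniqueness, if $u'$ satisfies $u'|_\Gamma = 0$ and $\ca_\lambda u' = 0$, then $u' \in C_0(\Omega) \cap H^1_\loc(\Omega)$, and Lemma~\ref{lwreg205} (whose proof carries over verbatim to $\ca_\lambda = \ca + \lambda I$) places $u'$ in $H^1_0(\Omega)$; coercivity of $\gota_\lambda$ for $\lambda > \lambda_0$ then forces $u' = 0$.
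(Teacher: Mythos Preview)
Your proof is correct and follows essentially the same route as the paper's: both hinge on the identity $u + R_\lambda u = U_0$ (written in the paper as $\tilde u - u = R_\lambda u$, with $\tilde u$ your $U_0$) together with the iterated $L_p$-smoothing bounds of Lemma~\ref{lwreg207} and the choice of $\lambda$ large enough to make the factor $c_1 c_2^{2d-1}(\lambda-\lambda_0)^{-1/4}$ small. The only cosmetic difference is the order of construction: the paper first produces $u \in H^1(\Omega)$ via coercivity of $\gota_\lambda$ and then bootstraps $\|u\|_{L_{p_n}}$ directly to reach $\|u\|_\infty \leq \tfrac12\|u\|_\infty + C\|\tilde u\|_\infty$, whereas you invert $I+R_\lambda$ on $C(\overline\Omega)$ by a Neumann series (via $\|R_\lambda^{2d}\|_{\infty\to\infty}<1$) and read off the $H^1$-regularity from $u = U_0 - R_\lambda u$ afterwards.
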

\begin{proof}
Choose $\delta = 0$ in Lemma~\ref{lwreg207}.
Let $c_1$ and $c_2$ be as in Lemma~\ref{lwreg207}.
Let $\lambda \in (\lambda_0 + 1,\infty)$ be such that 
$c_1 \, c_2^{2d-1} \, (\lambda - \lambda_0)^{-1/4} \, (1 + |\Omega|) \leq \frac{1}{2}$.
Let $R_\lambda$ be as in Lemma~\ref{lwreg207}.
Set $\varphi = \Phi|_\Gamma$.

There exist unique $w,\tilde w \in H^1_0(\Omega)$ such that 
$\gota_\lambda(w,v) = \gota_\lambda(\Phi,v)$ and
$\gotb_\lambda(\tilde w,v) = \gotb_\lambda(\Phi,v)$
for all $v \in H^1_0(\Omega)$.
Then $\tilde w \in C_0(\Omega)$ by Corollary~\ref{cwreg206}.
Define $u = \Phi - w$ and $\tilde u = \Phi - \tilde w$.
Then $\tilde u \in H^1(\Omega) \cap C(\overline \Omega)$ and 
$\tilde u|_\Gamma = \varphi$.
Moreover, $\gota_\lambda(u,v) = 0$ and $\gotb_\lambda(\tilde u,v) = 0$
for all $v \in H^1_0(\Omega)$,
and $\|\tilde u\|_{C(\overline \Omega)} \leq \|\varphi\|_{C(\Gamma)}$
by the result of Stampacchia mentioned in the introduction
(\cite{Stam2} Th\'eor\`eme~3.8).

Let $v \in H^1_0(\Omega)$.
Then 
\[
\gotb_\lambda(\tilde u - u,v)
= \sum_{k=1}^d (b_k \, u, \partial_k v)_{L_2(\Omega)}
   + (c_0 \, u, v)_{L_2(\Omega)}
\]
and 
$\tilde u - u = R_\lambda u$ by the definition of $R_\lambda$.

For all $n \in \{ 0,\ldots,2d \} $ define $p_n = \frac{4d}{2d-n}$.
Then $p_0 = 2$, $p_{2d-1} = 4d$, $p_{2d} = \infty$ 
and $\frac{1}{p_n} = \frac{1}{p_{n-1}} - \frac{1}{4d}$
for all $n \in \{ 1,\ldots,2d \} $.
So $\|\tilde u - u\|_{L_{p_n}(\Omega)} \leq c_2 \, \|u\|_{L_{p_{n-1}}(\Omega)}$
for all $n \in \{ 2,\ldots,2d \} $ and 
$\|\tilde u - u\|_{L_{p_1}(\Omega)} 
\leq c_1 \, (\lambda - \lambda_0)^{-1/4} \, \|u\|_{L_2(\Omega)}$
by Lemma~\ref{lwreg207}\ref{lwreg207-2} and \ref{lwreg207-1.5}.
Then 
\[
\|u\|_{L_{p_1}(\Omega)} 
\leq c_1 \, (\lambda - \lambda_0)^{-1/4} \, \|u\|_{L_2(\Omega)} 
   + (1 + |\Omega|) \, \|\tilde u\|_{L_\infty(\Omega)}
\]
and 
\[
\|u\|_{L_{p_n}(\Omega)} 
\leq c_2 \, \|u\|_{L_{p_{n-1}}(\Omega)} 
   + (1 + |\Omega|) \, \|\tilde u\|_{L_\infty(\Omega)}
\]
for all $n \in \{ 2,\ldots,2d \} $.
It follows by induction to $n$ that 
\[
\|u\|_{L_{p_n}(\Omega)} 
\leq c_1 \, c_2^{n-1} \, (\lambda - \lambda_0)^{-1/4} \, \|u\|_{L_2(\Omega)} 
   + (1 + |\Omega|) \sum_{k=0}^{n-1} c_2^k \, \|\tilde u\|_{L_\infty(\Omega)}
\]
for all $n \in \{ 2,\ldots,2d \} $.
So $u \in L_{p_{2d-1}}(\Omega) = L_{4d}(\Omega)$ and 
$\tilde u - u = R_\lambda u \in C_0(\Omega)$ by Lemma~\ref{lwreg207}\ref{lwreg207-3}.
In particular $u \in C(\overline \Omega)$.
Moreover, 
\begin{eqnarray*}
\|u\|_{L_\infty(\Omega)} 
& = & \|u\|_{L_{p_{2d}}(\Omega)}  \\
& \leq & c_1 \, c_2^{2d-1} \, (\lambda - \lambda_0)^{-1/4} \, \|u\|_{L_2(\Omega)} 
   + 2 d \, (1 + |\Omega|) \, c_2^{2d-1} \, \|\tilde u\|_{L_\infty(\Omega)}  \\
& \leq & c_1 \, c_2^{2d-1} \, (\lambda - \lambda_0)^{-1/4} \, (1 + |\Omega|) \, \|u\|_{L_\infty(\Omega)} 
   + 2 d \, (1 + |\Omega|) \, c_2^{2d-1} \, \|\tilde u\|_{L_\infty(\Omega)}  \\
& \leq & \frac{1}{2} \, \|u\|_{L_\infty(\Omega)} 
   + 2 d \, (1 + |\Omega|) \, c_2^{2d-1} \, \|\tilde u\|_{L_\infty(\Omega)}
\end{eqnarray*}
by the choice of $\lambda$.
So 
\[
\|u\|_{L_\infty(\Omega)} 
\leq 4 d \, (1 + |\Omega|) \, c_2^{2d-1} \, \|\tilde u\|_{L_\infty(\Omega)}
\leq 4 d \, (1 + |\Omega|) \, c_2^{2d-1} \, \|\varphi\|_{C(\Gamma)}
\]
and the proof of the lemma is complete.
\end{proof}

We next wish to remove the $\lambda$ in Lemma~\ref{lwreg210}.
For future purposes, we consider the full inhomogeneous problem.

\begin{prop} \label{pwreg214}
Let $p \in (d,\infty]$, $f_0 \in L_{p/2}(\Omega)$
and let $f_1,\ldots,f_d \in L_p(\Omega)$.
Let $u \in H^1_0(\Omega)$ be such that 
$\ca u = f_0 + \sum_{k=1}^d \partial_k f_k$.
Then $u \in C_0(\Omega)$.
\end{prop}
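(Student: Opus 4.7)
The plan is to shift the $b_k$, $c_0$ and a spectral parameter $\lambda$ to the right-hand side, so that $u$ solves a $\cb_\lambda$-equation whose inhomogeneity depends on $u$, then use Lax--Milgram to decompose $u = u_1 - R_\lambda u$ and bootstrap the integrability of $u$ via the mapping properties of $R_\lambda$ from Lemma~\ref{lwreg207}. Once $u \in L_{\tilde p}(\Omega)$ for some $\tilde p > d$, the conclusion $u \in C_0(\Omega)$ follows from Corollary~\ref{cwreg206}.

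Fix $\lambda = \lambda_0 + 1$, which is an admissible choice of $\delta$ in Lemma~\ref{lwreg207}. Set $g_0 = f_0 + (\lambda - c_0) u$ and $g_k = f_k + b_k u$ for $k = 1, \ldots, d$. A direct computation at the level of the sesquilinear forms shows that $\ca u = f_0 + \sum_k \partial_k f_k$ is equivalent to $\cb_\lambda u = g_0 + \sum_{k=1}^d \partial_k g_k$. By Lax--Milgram let $u_1 \in H^1_0(\Omega)$ be the unique solution of $\cb_\lambda u_1 = f_0 + \sum_k \partial_k f_k$ and let $u_2 \in H^1_0(\Omega)$ be the unique solution of
\[
\gotb_\lambda(u_2, v) = \bigl((\lambda - c_0) u, v\bigr)_{L_2(\Omega)} - \sum_{k=1}^d (b_k u, \partial_k v)_{L_2(\Omega)} \quad (v \in H^1_0(\Omega)).
\]
By uniqueness $u = u_1 + u_2$. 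Since $p > d$, Corollary~\ref{cwreg206} yields $u_1 \in C_0(\Omega) \subset L_\infty(\Omega)$, and comparing the defining identity of $R_\lambda$ (Lemma~\ref{lwreg207}(a) with $\delta = \lambda$) with the equation for $u_2$ forces $u_2 = -R_\lambda u$, so
\[
u = u_1 - R_\lambda u.
\]

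By Lemma~\ref{lwreg207}(c), the operator $R_\lambda$ maps $L_r(\Omega)$ into $L_{r'}(\Omega)$ whenever $r \in [2, \infty]$ and $\tfrac{1}{r'} = \tfrac{1}{r} - \tfrac{1}{4d}$. Combined with $u_1 \in L_\infty(\Omega) \subset L_{r'}(\Omega)$ this yields the implication $u \in L_r(\Omega) \Rightarrow u \in L_{r'}(\Omega)$. Starting from $u \in H^1_0(\Omega) \subset L_2(\Omega)$ and iterating this step, the quantity $1/r$ strictly decreases by $1/(4d)$ at each stage, so after finitely many iterations we obtain $u \in L_{\tilde p}(\Omega)$ with $\tilde p > d$.

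With $\tilde p > d$ in hand, set $q = \min(\tilde p, p) > d$: then $g_k = f_k + b_k u \in L_q(\Omega)$ and $g_0 = f_0 + (\lambda - c_0) u \in L_{q/2}(\Omega)$. A final application of Corollary~\ref{cwreg206} to the reformulated equation $\cb_\lambda u = g_0 + \sum_k \partial_k g_k$ (with the parameter $p$ of the corollary taken to be $q$) delivers $u \in C_0(\Omega)$. The main technical point is the sign and compatibility check identifying $u_2 = -R_\lambda u$, together with checking that the bootstrap improves the exponent at each step and terminates with $\tilde p > d$.
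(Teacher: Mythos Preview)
Your proof is correct and follows essentially the same route as the paper: both arguments set $\lambda=\delta=\lambda_0+1$, decompose $u$ as (the paper's $\tilde u$, your $u_1$) minus $R_\lambda u$, bootstrap the integrability of $u$ via Lemma~\ref{lwreg207}\ref{lwreg207-2}, and then conclude continuity from Corollary~\ref{cwreg206} (the paper instead invokes Lemma~\ref{lwreg207}\ref{lwreg207-3} at the last step, which is an immaterial variant). Your sign bookkeeping in identifying $u_2=-R_\lambda u$ is in fact cleaner than the paper's.
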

\begin{proof}
Without loss of generality we may assume that $p \in (d,4d)$.
Choose $\lambda = \delta = \lambda_0 + 1$ in Lemma~\ref{lwreg207} and in Proposition~\ref{pwreg203}.
By Proposition~\ref{pwreg203} there exists a unique $\tilde u \in H^1_0(\Omega) \cap C_0(\Omega)$
such that $\cb_\lambda \tilde u = f_0 + \sum_{k=1}^d \partial_k f_k$.
If $v \in C_c^\infty(\Omega)$, then 
\begin{eqnarray*}
\gotb_\lambda(\tilde u,v)
& = & \langle f_0 + \sum_{k=1}^d \partial_k f_k,v \rangle_{\cd'(\Omega) \times \cd(\Omega)}  \\
& = & \gota(u,v) \\
& = & \gotb_\lambda(u,v) 
   + \sum_{k=1}^d (b_k \, u, \partial_k v)_{L_2(\Omega)} 
   + ((c_0 - \delta \, \one_\Omega) \, u, v)_{L_2(\Omega)}
.  
\end{eqnarray*}
So 
\[
\gotb_\lambda(\tilde u - u,v) 
= \sum_{k=1}^d (b_k \, u, \partial_k v)_{L_2(\Omega)} 
   + ((c_0 - \delta \, \one_\Omega) \, u, v)_{L_2(\Omega)}
\]
and by density for all $v \in H^1_0(\Omega)$.
Hence $u - \tilde u = R_\lambda u$, where $R_\lambda$ is as in Lemma~\ref{lwreg207}.
For all $n \in \{ 0,\ldots,2d-1 \} $ define $p_n = \frac{4d}{2d-n}$.
Then $u - \tilde u \in L_2(\Omega) = L_{p_0}(\Omega)$.
It follows by induction to $n$ that $u \in L_{p_{n-1}}(\Omega)$ and 
$u - \tilde u \in L_{p_n}(\Omega)$ for all $n \in \{ 1,\ldots,2d-1 \} $, where
the last part follows from Lemma~\ref{lwreg207}\ref{lwreg207-2}.
Hence $u - \tilde u \in L_{p_{2d-1}}(\Omega) = L_{4d}(\Omega)$
and $u \in L_p(\Omega)$.
Then Lemma~\ref{lwreg207}\ref{lwreg207-3} gives  
$u - \tilde u = R_\lambda u \in C_0(\Omega)$ and therefore $u \in C_0(\Omega)$.
\end{proof}

\begin{cor} \label{cwreg208}
Let $p \in (d,\infty]$.
Then $(A^D)^{-1} (L_p(\Omega)) \subset C_0(\Omega)$.
\end{cor}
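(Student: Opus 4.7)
The plan is to reduce the statement directly to Proposition~\ref{pwreg214}, which is the main technical workhorse already in place. The only real content of the corollary is to interpret the operator identity $A^D u = f$ as the distributional identity $\ca u = f$ required by the proposition, together with a cheap integrability upgrade coming from boundedness of $\Omega$.

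First I would observe that $(A^D)^{-1}$ is well defined on $L_p(\Omega)$: since $\Omega$ is bounded and $p > d \geq 2$, any $f \in L_p(\Omega)$ lies in $L_2(\Omega)$, and the hypothesis $0 \notin \sigma(A^D)$ of Theorem~\ref{twreg101} guarantees that $u := (A^D)^{-1} f$ is a well-defined element of $D(A^D) \subset H^1_0(\Omega)$ satisfying $A^D u = f$ in $L_2(\Omega)$.

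Next I would translate this into the distributional framework used in Proposition~\ref{pwreg214}. By the very definition of $A^D$ as the operator associated with $\gota|_{H^1_0(\Omega) \times H^1_0(\Omega)}$, the identity $A^D u = f$ is equivalent to
\[
\gota(u,v) = (f,v)_{L_2(\Omega)} \qquad \text{for every } v \in H^1_0(\Omega),
\]
which in particular holds for every $v \in C_c^\infty(\Omega)$. Unpacking the definition of $\gota$, this says exactly that $\ca u = f$ in $\cd'(\Omega)$.

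Finally, I would apply Proposition~\ref{pwreg214} with the trivial decomposition $f_0 := f$ and $f_1 = \cdots = f_d := 0$. The only thing to check is that $f_0 \in L_{p/2}(\Omega)$, and this is immediate from the boundedness of $\Omega$ together with the inclusion $L_p(\Omega) \subset L_{p/2}(\Omega)$. Proposition~\ref{pwreg214} then yields $u \in C_0(\Omega)$, which is the conclusion. There is no real obstacle here, since all of the hard work (the Moser-type iteration using the resolvent estimates for $\cb_\lambda$ in Lemmas~\ref{lwreg207} and~\ref{lwreg210}) has already been absorbed into Proposition~\ref{pwreg214}.
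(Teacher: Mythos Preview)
Your proposal is correct and is exactly the argument the paper intends: the corollary is stated without proof immediately after Proposition~\ref{pwreg214}, and your reduction---taking $u=(A^D)^{-1}f\in H^1_0(\Omega)$, reading $A^D u=f$ as $\ca u=f$ in $\cd'(\Omega)$, and applying Proposition~\ref{pwreg214} with $f_0=f\in L_p(\Omega)\subset L_{p/2}(\Omega)$ and $f_1=\cdots=f_d=0$---is precisely what is meant.
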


\begin{cor} \label{cwreg211} 
There exists a $c' > 0$ such that 
$\|(A^D)^{-1} f\|_{L_\infty(\Omega)} \leq c' \, \|f\|_{L_\infty(\Omega)}$
for all $f \in L_\infty(\Omega)$.
\end{cor}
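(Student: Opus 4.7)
The plan is to obtain the bound as a consequence of Corollary~\ref{cwreg208} via the closed graph theorem. Since $\Omega$ is bounded, we have $L_\infty(\Omega) \subset L_p(\Omega)$ for every $p \in (d,\infty]$, and so Corollary~\ref{cwreg208} already gives that $(A^D)^{-1}$ maps $L_\infty(\Omega)$ into $C_0(\Omega) \subset L_\infty(\Omega)$ linearly. It remains only to verify boundedness.

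To apply the closed graph theorem I would take a sequence $(f_n)$ in $L_\infty(\Omega)$ with $f_n \to f$ in $L_\infty(\Omega)$ and such that $(A^D)^{-1} f_n \to g$ in $L_\infty(\Omega)$, and check that $g = (A^D)^{-1} f$. Since $\Omega$ is bounded, convergence in $L_\infty(\Omega)$ implies convergence in $L_2(\Omega)$, so $f_n \to f$ in $L_2(\Omega)$. Because $0 \notin \sigma(A^D)$, the operator $(A^D)^{-1}$ is bounded on $L_2(\Omega)$, and hence $(A^D)^{-1} f_n \to (A^D)^{-1} f$ in $L_2(\Omega)$. On the other hand $(A^D)^{-1} f_n \to g$ in $L_\infty(\Omega)$, hence in $L_2(\Omega)$. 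Uniqueness of the $L_2$-limit gives $g = (A^D)^{-1} f$ almost everywhere, so the graph is closed.

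The closed graph theorem then yields the desired constant $c' > 0$ with $\|(A^D)^{-1} f\|_{L_\infty(\Omega)} \leq c' \, \|f\|_{L_\infty(\Omega)}$ for all $f \in L_\infty(\Omega)$. There is no real obstacle here; the content of the corollary is already carried by Corollary~\ref{cwreg208}, and this statement is simply the quantitative form extracted via closed graph. The only point to be mindful of is having the two Banach space structures ($L_\infty(\Omega)$ and $L_\infty(\Omega)$) connected through the weaker $L_2(\Omega)$ topology on which $(A^D)^{-1}$ is known to be continuous, which is exactly what makes the graph closed.
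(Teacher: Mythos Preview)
Your proof is correct and is exactly the approach the paper takes: the paper's proof consists of the two words ``Closed graph theorem,'' and you have simply spelled out the details. The key inputs---Corollary~\ref{cwreg208} for the target space and the $L_2$-boundedness of $(A^D)^{-1}$ to close the graph---are precisely the ones intended.
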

\begin{proof}
Closed graph theorem.
\end{proof}

\begin{proof}[{\bf Proof of Proposition~\ref{pwreg209}.}]
Let $c,\lambda > 0$ be as in Lemma~\ref{lwreg210}
and let $c' > 0$ be as in Corollary~\ref{cwreg211}.
By Lemma~\ref{lwreg210} there exists a unique 
$\tilde u \in H^1(\Omega) \cap C(\overline \Omega)$
such that $\tilde u|_\Gamma = \Phi|_\Gamma$ and $\ca_\lambda \tilde u = 0$.
By Lemma~\ref{lwreg202} there exists a unique $w \in H^1_0(\Omega)$ such that 
$\gota(w,v) = \gota(\Phi|_\Omega,v)$ for all $v \in H^1_0(\Omega)$.
Set $u = \Phi|_\Omega - w$ and $\tilde w = \Phi|_\Omega - \tilde u$.
Then 
\begin{eqnarray*}
\gota(w,v)
& = & \gota(\Phi|_\Omega,v)
= \gota_\lambda(\Phi|_\Omega,v) - \lambda \, (\Phi,v)_{L_2(\Omega)}
= \gota_\lambda(\tilde w,v) - \lambda \, (\Phi,v)_{L_2(\Omega)}  \\
& = & \gota(\tilde w,v) 
   + \lambda \, (\tilde w,v)_{L_2(\Omega)}
   - \lambda \, (\Phi,v)_{L_2(\Omega)}
= \gota(\tilde w,v) 
   - \lambda \, (\tilde u,v)_{L_2(\Omega)}
\end{eqnarray*}
for all $v \in H^1_0(\Omega)$. 
So
\[
\gota(\tilde u - u, v) 
= \gota(w - \tilde w,v)
= - \lambda \, (\tilde u,v)_{L_2(\Omega)}
. \]
Since $\tilde u - u \in H^1_0(\Omega)$ it follows that
$A^D(\tilde u - u) = - \lambda \, \tilde u$.
Consequently, $u = \tilde u + \lambda \, (A^D)^{-1} \tilde u \in C_0(\Omega)$
by Corollary~\ref{cwreg208}.
Moreover,
\begin{eqnarray*}
\|u\|_{C(\overline \Omega)}
& = & \|u\|_{L_\infty(\Omega)}
\leq \|\tilde u\|_{L_\infty(\Omega)} + \lambda \, \|(A^D)^{-1} \tilde u\|_{L_\infty(\Omega)}  \\
& \leq & (1 + c' \, \lambda) \, \|\tilde u\|_{L_\infty(\Omega)}
\leq (1 + c' \, \lambda) \, c \, \|\Phi|_\Gamma\|_{C(\Gamma)}
\end{eqnarray*}
and the proof of Proposition~\ref{pwreg209} is complete.
\end{proof}

Define $|||\cdot||| \colon H^1_\loc(\Omega) \to [0,\infty]$ by
\[
|||u||| 
= \sup_{\delta > 0} 
  \sup_{\scriptstyle \Omega_0 \subset \Omega \; {\rm open} \atop
        \scriptstyle d(\Omega_0,\Gamma) = \delta}
    \delta \Big( \int_{\Omega_0} |\nabla u|^2 \Big)^{1/2}
.  \]
Finally we need the following Caccioppoli inequality.

\begin{prop} \label{pwreg213}
There exists a $c' \geq 1$ such that 
$|||u||| \leq c' \, \|u\|_{L_2(\Omega)}$
for all $u \in H^1(\Omega)$ such that $\ca u = 0$.
\end{prop}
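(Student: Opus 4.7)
The plan is a standard Caccioppoli argument on the interior region $\{d(\cdot, \Gamma) > \delta/2\}$. Fix $\delta > 0$ and an open $\Omega_0 \subset \Omega$ with $d(\Omega_0, \Gamma) = \delta$. Since $\delta \leq \diam \Omega < \infty$, it suffices to prove an inequality of the form $\delta^2 \int_{\Omega_0} |\nabla u|^2 \leq c'^2 \|u\|_{L_2(\Omega)}^2$ with constant independent of $\delta$ and $\Omega_0$. First I would pick a real-valued cutoff $\eta \in C_c^\infty(\Ri^d)$ with $0 \leq \eta \leq 1$, $\eta = 1$ on $\Omega_0$, $\supp \eta \subset \{x \in \Omega : d(x, \Gamma) > \delta/2\}$, and $\|\nabla \eta\|_\infty \leq C_1/\delta$ for an absolute constant $C_1$ (obtained, for instance, by mollifying the indicator of $\{d(\cdot, \Gamma) \geq 3\delta/4\}$ at scale $\delta/8$).

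Since $\supp \eta$ is compactly contained in $\Omega$ and $u \in H^1(\Omega)$, the test function $v := \eta^2 u$ lies in $H^1_0(\Omega)$. The form $\gota$ is continuous on $H^1(\Omega) \times H^1(\Omega)$, so density of $C_c^\infty(\Omega)$ in $H^1_0(\Omega)$ promotes the distributional identity $\ca u = 0$ to $\gota(u, \eta^2 u) = 0$. Expanding $\overline{\partial_l v} = \eta^2 \overline{\partial_l u} + 2 \eta (\partial_l \eta) \bar u$ and taking real parts, the principal part contributes
\[
\mu \int \eta^2 |\nabla u|^2 \leq \RRe \sum_{k,l} \int a_{kl} \eta^2 (\partial_k u) \overline{\partial_l u}
\]
by ellipticity, while every remaining piece of $\RRe \gota(u, \eta^2 u)$ carries either an extra factor $\nabla \eta$ (which provides a $1/\delta$ weight) or a $u$ without derivative, together with a bounded coefficient $a_{kl}, b_k, c_k$ or $c_0$.

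Applying Young's inequality $ab \leq \varepsilon a^2 + (4\varepsilon)^{-1} b^2$ to each cross term with $\varepsilon$ small enough to absorb a fraction of $\int \eta^2 |\nabla u|^2$ into the left-hand side then yields
\[
\int \eta^2 |\nabla u|^2 \leq C_2 \int (|\nabla \eta|^2 + \eta^2) |u|^2 \leq C_3 (\delta^{-2} + 1) \|u\|_{L_2(\Omega)}^2.
\]
Multiplying by $\delta^2$ and using $\delta \leq \diam \Omega$ gives the required bound on $\delta^2 \int_{\Omega_0} |\nabla u|^2$, and taking suprema completes the proof. There is no genuine obstacle; the only points meriting care are the verification that $\eta^2 u$ is an admissible test function (handled by the density argument above) and the bookkeeping of cross terms in the complex-valued setting.
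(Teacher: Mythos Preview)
Your argument is correct and is precisely the standard Caccioppoli estimate; the paper does not give its own proof but simply refers to \cite{GiM} Theorem~4.4, which contains exactly this computation. In other words, you have written out what the paper outsources to a reference, so the approaches coincide.
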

\begin{proof}
See \cite{GiM} Theorem~4.4.
\end{proof}

Now we are able to prove Theorem~\ref{twreg101}.

\begin{proof}[{\bf Proof of Theorem~\ref{twreg101}.}]
The uniqueness is already proved in Proposition~\ref{pwreg202.5}.

Let $c > 0$ and $c' \geq 1$ be as in Propositions~\ref{pwreg209} and \ref{pwreg213}.
Let $\Phi \in C^1(\Ri^d) \cap H^1(\Ri^d)$.
By Proposition~\ref{pwreg209} there exists a unique 
$u \in H^1(\Omega) \cap C(\overline \Omega)$ such that 
$u|_\Gamma = \Phi|_\Gamma$ and $\ca u = 0$.
Moreover,
\begin{eqnarray}
\|u\|_{C(\overline \Omega)} + |||u|||
& \leq & \|u\|_{C(\overline \Omega)} + c' \, \|u\|_{L_2(\Omega)}  \nonumber  \\
& \leq & (2 + |\Omega|) \, c' \, \|u\|_{C(\overline \Omega)}   \nonumber  \\
& \leq & (2 + |\Omega|) \, c \, c' \, \|\Phi|_\Gamma\|_{C(\Gamma)}
.
\label{etwreg101;4}
\end{eqnarray}
It follows from (\ref{etwreg101;4}) that we can define a linear map
$F \colon \{ \Phi|_\Gamma : \Phi \in C^1(\Ri^d) \cap H^1(\Ri^d) \} \to H^1(\Omega) \cap C(\overline \Omega)$
by $F(\Phi|_\Gamma) = u$, where $u \in H^1(\Omega) \cap C(\overline \Omega)$ 
is such that 
$u|_\Gamma = \Phi|_\Gamma$ and $\ca u = 0$.
Now let $\varphi \in C(\Gamma)$.
By the Stone--Weierstra\ss\ theorem there are 
$\Phi_1,\Phi_2,\ldots \in C^1(\Ri^d) \cap H^1(\Ri^d)$ such that 
$\lim \Phi_n|_\Gamma = \varphi$ in $C(\Gamma)$.
Set $u_n = F(\Phi_n|_\Gamma)$ for all $n \in \Ni$.
Then it follows from (\ref{etwreg101;4}) that $(u_n)_{n \in \Ni}$
is a Cauchy sequence in $C(\overline \Omega)$.
Let $u = \lim u_n$ in $C(\overline \Omega)$.
Also $(u_n)_{n \in \Ni}$ is a Cauchy sequence in $H^1_\loc(\Omega)$ by 
(\ref{etwreg101;4}).
So $u \in H^1_\loc(\Omega)$.
Since $\ca u_n = 0$  for all $n \in \Ni$, one deduces that 
$\ca u = 0$.
Moreover, $u|_\Gamma = \lim u_n|_\Gamma = \lim \Phi_n|_\Gamma = \varphi$.
This proves existence.
Finally, 
\[
\|u\|_{C(\overline \Omega)}
= \lim \|u_n\|_{C(\overline \Omega)}
\leq \lim (2 + |\Omega|) \, c \, c' \, \|\Phi_n|_\Gamma\|_{C(\Gamma)}
= (2 + |\Omega|) \, c \, c' \, \|\varphi\|_{C(\Gamma)}
.  \]
This completes the proof of Theorem~\ref{twreg101}.
\end{proof}

Theorem~\ref{twreg101} has the following extension.

\begin{thm} \label{twreg216}
Adopt the notation and assumptions of Theorem~\ref{twreg101}.
Let $\varphi \in C(\Gamma)$, $p \in (d,\infty]$, $f_0 \in L_{p/2}(\Omega)$
and let $f_1,\ldots,f_d \in L_p(\Omega)$.
Then there exists a unique $u \in C(\overline \Omega) \cap H^1_\loc(\Omega)$
such that $u|_\Gamma = \varphi$ and $\ca u = f_0 + \sum_{k=1}^d \partial_k f_k$.
\end{thm}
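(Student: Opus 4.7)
The plan is to reduce Theorem~\ref{twreg216} to the already-proved Theorem~\ref{twreg101} by decomposing a solution into a homogeneous part carrying the boundary data and an inhomogeneous part with zero boundary values. Concretely, if I can find $u_1 \in C(\overline{\Omega}) \cap H^1_\loc(\Omega)$ with $u_1|_\Gamma = \varphi$ and $\ca u_1 = 0$, and $u_2 \in H^1_0(\Omega) \cap C_0(\Omega)$ with $\ca u_2 = f_0 + \sum_{k=1}^d \partial_k f_k$, then $u := u_1 + u_2$ lies in $C(\overline{\Omega}) \cap H^1_\loc(\Omega)$, satisfies $u|_\Gamma = \varphi$ (since $u_2$ vanishes on $\Gamma$), and solves the inhomogeneous equation.

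For uniqueness I would argue as follows: if $u, u' \in C(\overline{\Omega}) \cap H^1_\loc(\Omega)$ are two solutions, then $w := u - u'$ is in $C(\overline{\Omega}) \cap H^1_\loc(\Omega)$, vanishes on $\Gamma$, and satisfies $\ca w = 0$. Proposition~\ref{pwreg202.5} then forces $w = 0$, so the solution is unique.

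For existence, $u_1$ is supplied directly by Theorem~\ref{twreg101}. To produce $u_2$ I would first apply Lemma~\ref{lwreg202}. Its hypotheses need a brief verification: since $\Omega$ is bounded and $p > d \geq 2$, we have $f_1,\ldots,f_d \in L_p(\Omega) \subset L_2(\Omega)$; and $f_0 \in L_{p/2}(\Omega) \subset L_{\tilde p}(\Omega)$ for $\tilde p = \max\{p/2, 2d/(d+2)\}$, which lies in $(1,\infty)$ because $p/2 > d/2 \geq 1$ and because the inequality $p/2 \geq 2d/(d+2)$ follows from $p > d$ (as $p(d+2) > d(d+2) \geq 4d$ for $d \geq 2$). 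Lemma~\ref{lwreg202} then delivers a unique $u_2 \in H^1_0(\Omega)$ with $\ca u_2 = f_0 + \sum_{k=1}^d \partial_k f_k$. Proposition~\ref{pwreg214}, whose hypotheses match exactly ($p \in (d,\infty]$, $f_0 \in L_{p/2}(\Omega)$, $f_k \in L_p(\Omega)$), then upgrades the regularity to $u_2 \in C_0(\Omega)$. Setting $u = u_1 + u_2$ finishes the proof.

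There is really no hard step here: everything follows by assembling Theorem~\ref{twreg101}, Lemma~\ref{lwreg202}, Proposition~\ref{pwreg214}, and Proposition~\ref{pwreg202.5}. The only mildly delicate point is the routine check that the integrability hypotheses of Lemma~\ref{lwreg202} are implied by those of the present theorem; once that is in place, the linear decomposition $u = u_1 + u_2$ does all the work.
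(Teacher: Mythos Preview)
Your proposal is correct and follows exactly the same route as the paper: uniqueness via Proposition~\ref{pwreg202.5}, and existence by writing $u = u_1 + u_2$ with $u_1$ from Theorem~\ref{twreg101} and $u_2 \in H^1_0(\Omega)$ from Lemma~\ref{lwreg202}, upgraded to $C_0(\Omega)$ by Proposition~\ref{pwreg214}. The only cosmetic wrinkle is that your choice $\tilde p = \max\{p/2, 2d/(d+2)\}$ equals $\infty$ when $p = \infty$, but since $\Omega$ is bounded you can simply take any finite $\tilde p \geq 2d/(d+2)$ in that case.
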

\begin{proof}
The uniqueness follows as in the proof of Proposition~\ref{pwreg202.5}.

By Lemma~\ref{lwreg202} there exists a $u_0 \in H^1_0(\Omega)$ such that 
$\ca u_0 = f_0 + \sum_{k=1}^d \partial_k f_k$.
Then $u_0 \in C_0(\Omega)$ by Proposition~\ref{pwreg214}.
By Theorem~\ref{twreg101} there exists a $u_1 \in C(\overline \Omega) \cap H^1_\loc(\Omega)$
such that $u_1|_\Gamma = \varphi$ and $\ca u_1 = 0$.
Define $u = u_0 + u_1$.
Then $u \in C(\overline \Omega) \cap H^1_\loc(\Omega)$.
Moreover, $u|_\Gamma = \varphi$ and $\ca u = f_0 + \sum_{k=1}^d \partial_k f_k$.
\end{proof}

We conclude this section with some results for the classical solution.
They will be used in Section~\ref{Swreg3new} and are of independent interest.
Recall that $P \colon C(\Gamma) \to C(\overline \Omega)$ is given by
$P \varphi = u$, where $u \in C(\overline \Omega) \cap H^1_\loc(\Omega)$ is 
the classical solution, so $u|_\Gamma = \varphi$ and $\ca u = 0$.

\begin{prop} \label{pwreg240}
Let $\Phi \in C(\overline \Omega) \cap H^1_\loc(\Omega)$.
Suppose there exists a $w \in H^1_0(\Omega)$ such that $\ca \Phi = \ca w$.
Then $w \in C(\overline \Omega)$ and $P(\Phi|_\Gamma) = \Phi - w$.
\end{prop}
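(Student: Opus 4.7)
The plan is to exploit the uniqueness statement in Proposition~\ref{pwreg202.5}. Set $\varphi := \Phi|_\Gamma \in C(\Gamma)$ and, using Theorem~\ref{twreg101}, let $u := P\varphi$, so that $u \in C(\overline \Omega) \cap H^1_\loc(\Omega)$, $u|_\Gamma = \varphi$, and $\ca u = 0$. Define $\tilde w := \Phi - u$. Then $\tilde w \in C(\overline \Omega) \cap H^1_\loc(\Omega)$ with $\tilde w|_\Gamma = 0$, hence in particular $\tilde w \in C_0(\Omega) \cap H^1_\loc(\Omega)$. Linearity of $\ca$ combined with the hypothesis gives $\ca \tilde w = \ca \Phi - \ca u = \ca w$. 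Once I establish $\tilde w = w$, it follows that $w = \Phi - u \in C(\overline \Omega)$ and $P(\Phi|_\Gamma) = u = \Phi - w$, which is the claim.

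The key step is showing $\tilde w \in H^1_0(\Omega)$ via Lemma~\ref{lwreg205}. Because $w \in H^1_0(\Omega) \subset H^1(\Omega)$, the distribution $\ca w$ can be computed directly from definition~(\ref{eSwreg1;4}): for every $v \in C_c^\infty(\Omega)$,
\[
\langle \ca w, v \rangle
= \int_\Omega g_0 \, \overline v - \sum_{l=1}^d \int_\Omega h_l \, \overline{\partial_l v},
\]
where $h_l := \sum_{k=1}^d a_{kl}\, \partial_k w + b_l\, w$ and $g_0 := \sum_{k=1}^d c_k\, \partial_k w + c_0\, w$ all lie in $L_2(\Omega)$ (products of $L_\infty$ coefficients with $L_2$ functions). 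Setting $f_k := -h_k$, I obtain $\ca \tilde w = \ca w = g_0 + \sum_{k=1}^d \partial_k f_k$ with $f_k \in L_2(\Omega)$ and $g_0 \in L_{\tilde p}(\Omega)$ for $\tilde p = 2$; the requirement $2 \geq \frac{2d}{d+2}$ is satisfied for every $d \geq 2$. Lemma~\ref{lwreg205} then yields $\tilde w \in H^1_0(\Omega)$.

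Finally, $\tilde w - w \in H^1_0(\Omega)$ and $\ca(\tilde w - w) = 0$ together force $\gota(\tilde w - w, v) = 0$ first for all $v \in C_c^\infty(\Omega)$ and then by density for all $v \in H^1_0(\Omega)$. Hence $\tilde w - w \in D(A^D)$ with $A^D(\tilde w - w) = 0$, and the standing assumption $0 \notin \sigma(A^D)$ yields $\tilde w = w$. The only substantive point is rewriting $\ca w$ as a sum of an $L_2$-function and distributional derivatives of $L_2$-functions so that Lemma~\ref{lwreg205} applies; everything else is uniqueness and routine bookkeeping, so I do not foresee any genuine obstacle.
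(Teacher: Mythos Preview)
Your proof is correct and is essentially identical to the paper's own argument: define $\tilde w = \Phi - P(\Phi|_\Gamma)$, rewrite $\ca w$ as $f_0 + \sum_k \partial_k f_k$ with $L_2$ data, apply Lemma~\ref{lwreg205} to get $\tilde w \in H^1_0(\Omega)$, and conclude $\tilde w - w \in \ker A^D = \{0\}$. The only cosmetic difference is that you spell out the verification $2 \geq \tfrac{2d}{d+2}$ and the density step for $\gota(\tilde w - w,\cdot)=0$ a bit more explicitly.
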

\begin{proof}
Write $\tilde w = \Phi - P(\Phi|_\Gamma)$.
Then $\tilde w \in C_0(\Omega) \cap H^1_\loc(\Omega)$
and $\ca \tilde w = \ca \Phi = \ca w = f_0 + \sum_{k=1}^d \partial_k f_k$,
where $f_0 = c_0 \, w + \sum_{l=1}^d c_l \, \partial_l w \in L_2(\Omega)$
and $f_k = - \sum_{l=1}^d a_{lk} \, \partial_l w - b_k \, w \in L_2(\Omega)$
for all $k \in \{ 1,\ldots,d \} $.
So $\tilde w \in H^1_0(\Omega)$ by Lemma~\ref{lwreg205}.
Hence $\ca(\tilde w - w) = $ and $\tilde w - w \in \ker A^D = \{ 0 \} $.
So $w = \tilde w = \Phi - P(\Phi|_\Gamma)$.
\end{proof}

We need the dual map of $\ca$.
Define the map $\ca^t \colon H^1_\loc(\Omega) \to \cd'(\Omega)$ by
\[
\langle \ca u,v \rangle_{\cd'(\Omega) \times \cd(\Omega)}
= \sum_{k,l=1}^d \int_\Omega a_{lk} \, (\partial_k u) \, \overline{\partial_l v}
   - \sum_{k=1}^d \int_\Omega \overline{c_k} \, u \, \overline{\partial_k v}
   - \sum_{k=1}^d \int_\Omega \overline{b_k} \, (\partial_k u) \, \overline v
   + \int_\Omega \overline{c_0} \, u \, \overline v
\]
for all $u \in H^1_\loc(\Omega)$ and $v \in C_c^\infty(\Omega)$.

\begin{cor} \label{cwreg241}
Suppose that $a_{kl}, b_k, c_k \in W^{1,\infty}(\Omega)$ for all 
$k,l \in \{ 1,\ldots,d \} $.
Let $\Phi \in C(\overline \Omega)$.
Suppose there exists a $w \in H^1_0(\Omega)$ such that 
\[
\langle \Phi, \ca^t v\rangle_{\cd'(\Omega) \times \cd(\Omega)}
= \gota(w,v)
\]
for all $v \in C_c^\infty(\Omega)$.
Then $w \in C(\overline \Omega)$ and $P(\Phi|_\Gamma) = \Phi - w$.
\end{cor}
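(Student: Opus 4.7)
The plan is to reduce Corollary~\ref{cwreg241} to Proposition~\ref{pwreg240}. The two statements have identical conclusions and the hypotheses match except for the regularity of $\Phi$: Proposition~\ref{pwreg240} requires $\Phi \in C(\overline\Omega) \cap H^1_\loc(\Omega)$, whereas here we are given only $\Phi \in C(\overline\Omega)$. The whole task is therefore to upgrade $\Phi$ to $H^1_\loc(\Omega)$.

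First I would reinterpret the weak hypothesis as a distributional equation on $\Omega$. Since $a_{kl}, b_k, c_k \in W^{1,\infty}(\Omega)$, integration by parts gives
\[
\langle \ca u, v\rangle_{\cd'(\Omega) \times \cd(\Omega)} = \langle u, \ca^t v\rangle_{\cd'(\Omega) \times \cd(\Omega)}
\]
for all $u \in C_c^2(\Omega)$ and $v \in C_c^\infty(\Omega)$. Reading the right-hand side as a pairing against $u$ extends the definition of $\ca u$ to any $u \in L_{1,\loc}(\Omega)$ as an element of $\cd'(\Omega)$, and the hypothesis of the corollary becomes exactly $\ca \Phi = \ca w$ in $\cd'(\Omega)$.

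Setting $\tilde w := \Phi - w \in L_{2,\loc}(\Omega)$ we then have $\ca \tilde w = 0$ in $\cd'(\Omega)$. The technical core of the argument is to bootstrap this to $\tilde w \in H^1_\loc(\Omega)$ via standard interior $H^1$-regularity for strongly elliptic divergence-form operators with Lipschitz coefficients. I would carry this out on any subdomain $\Omega_0 \Subset \Omega$ by Nirenberg's difference-quotient method: the commutator $[\ca, D_h]\tilde w$ is controlled in $H^{-1}(\Omega_0)$ uniformly in $h$ thanks to the $W^{1,\infty}$ hypothesis on the coefficients, and ellipticity converts this into a uniform $L_2(\Omega_0)$-bound on $D_h \tilde w$, forcing $\nabla \tilde w \in L_2(\Omega_0)$. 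This regularity step is the one place where the Lipschitz hypothesis on the coefficients is genuinely used, and it is the main obstacle of the proof.

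Once $\tilde w \in H^1_\loc(\Omega)$ is in hand, $\Phi = \tilde w + w$ lies in $C(\overline\Omega) \cap H^1_\loc(\Omega)$ and satisfies $\ca \Phi = \ca w$ with $w \in H^1_0(\Omega)$. Proposition~\ref{pwreg240} then applies verbatim and yields both $w \in C(\overline\Omega)$ and $P(\Phi|_\Gamma) = \Phi - w$.
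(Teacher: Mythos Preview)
Your proposal is correct and follows essentially the same route as the paper's proof: both observe that the hypothesis says $\langle \Phi - w, \ca^t v\rangle = 0$ for all $v \in C_c^\infty(\Omega)$, invoke interior elliptic regularity (enabled by the $W^{1,\infty}$ assumption on the coefficients) to obtain $\Phi - w \in H^1_\loc(\Omega)$, hence $\Phi \in H^1_\loc(\Omega)$, and then apply Proposition~\ref{pwreg240}. The paper simply cites ``elliptic regularity'' as a black box where you sketch a Nirenberg difference-quotient argument, but the logical structure is identical.
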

\begin{proof}
By assumption one has $\langle \Phi - w, \ca^t v\rangle_{\cd'(\Omega) \times \cd(\Omega)} = 0$
for all $v \in C_c^\infty(\Omega)$.
Hence $\Phi - w \in H^1_\loc(\Omega)$ by elliptic regularity.
So $\Phi \in H^1_\loc(\Omega)$ and 
\[
\langle \ca \Phi, v\rangle_{\cd'(\Omega) \times \cd(\Omega)}
= \langle \Phi, \ca^t v\rangle_{\cd'(\Omega) \times \cd(\Omega)}
= \gota(w,v)
= \langle \ca w, v\rangle_{\cd'(\Omega) \times \cd(\Omega)}
\]
for all $v \in C_c^\infty(\Omega)$.
Therefore $\ca \Phi = \ca w$ and the result follows from 
Proposition~\ref{pwreg240}.
\end{proof}

The last corollary takes a very simple form for the Laplacian.

\begin{cor} \label{cwreg242}
Let $\Phi \in C(\overline \Omega)$.
Suppose that $\Delta \Phi \in H^{-1}(\Omega)$.
Let $w \in H^1_0(\Omega)$ be such that $\Delta \Phi = \Delta w$ as distribution.
Then $w \in C(\overline \Omega)$ and $P(\Phi|_\Gamma) = \Phi - w$.
\end{cor}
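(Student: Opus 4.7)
The plan is to obtain Corollary~\ref{cwreg242} as the Laplacian-specialization of Corollary~\ref{cwreg241}, namely the case $a_{kl} = \delta_{kl}$, $b_k = c_k = 0$ and $c_0 = 0$, so that $\ca = -\Delta$. The coefficients are constants and hence automatically lie in $W^{1,\infty}(\Omega)$, so the coefficient regularity hypothesis of Corollary~\ref{cwreg241} is satisfied, and $\ca^t = -\Delta$ by self-adjointness. The associated form is simply $\gota(u,v) = \sum_{k=1}^d \int_\Omega (\partial_k u)\,\overline{\partial_k v}$.

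What remains is to check that the distributional identity $\Delta\Phi = \Delta w$ is precisely the form identity
$$
\langle \Phi, \ca^t v\rangle_{\cd'(\Omega) \times \cd(\Omega)}
= \gota(w,v)
\qquad \text{for all } v \in C_c^\infty(\Omega)
$$
appearing as the hypothesis of Corollary~\ref{cwreg241}. For $v \in C_c^\infty(\Omega)$ and $w \in H^1_0(\Omega)$, one integration by parts yields $\gota(w,v) = -\int_\Omega w\,\overline{\Delta v} = \langle -\Delta w, v\rangle_{\cd'(\Omega) \times \cd(\Omega)}$, while by the very definition of the distributional Laplacian $\langle \Phi, \ca^t v\rangle = \langle \Phi, -\Delta v\rangle = \langle -\Delta \Phi, v\rangle_{\cd'(\Omega) \times \cd(\Omega)}$. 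Hence the form identity reduces to $-\Delta \Phi = -\Delta w$ in $\cd'(\Omega)$, which is exactly what is assumed. Corollary~\ref{cwreg241} then delivers both conclusions in one stroke: $w \in C(\overline\Omega)$ and $P(\Phi|_\Gamma) = \Phi - w$.

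I anticipate no genuine obstacle; the result is essentially a cosmetic repackaging of Corollary~\ref{cwreg241} in its simplest instance. All the substantive work---using elliptic regularity to lift $\Phi$ into $H^1_\loc(\Omega)$, and then invoking Lemma~\ref{lwreg205} together with $\ker A^D = \{0\}$ to identify $\Phi - w$ as the classical solution---has already been carried out inside the proofs of Proposition~\ref{pwreg240} and Corollary~\ref{cwreg241}.
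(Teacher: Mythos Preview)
Your proposal is correct and matches the paper's approach exactly: the paper presents Corollary~\ref{cwreg242} as the immediate Laplacian specialization of Corollary~\ref{cwreg241}, without further proof. Your verification that the distributional identity $\Delta\Phi = \Delta w$ translates into the form identity required by Corollary~\ref{cwreg241} is the only thing to check, and you have done so.
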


This corollary is a special case of \cite{AD2} Theorem~1.1.

\section{Variational and classical solutions: comparison}  \label{Swreg3new}

In this section we show that the variational and classical solutions
of the Dirichlet problem are the same.
For that we assume throughout this section that $\Omega$ is an open 
set with Lipschitz boundary.
Moreover, we adopt the assumptions and notation of Theorem~\ref{twreg101}.
Recall that for all $\varphi \in C(\Gamma)$ we denote by $P \varphi \in C(\overline \Omega)$
the classical solution and for all $\varphi \in H^{1/2}(\Gamma)$, 
we denote by $\gamma \varphi \in H^1(\Omega)$ the variational solution
of the Dirichlet problem.
We shall prove in this section that they coincide if both are defined.

The fact that they coincide for restrictions to $\Gamma$ of functions in 
$C(\overline \Omega) \cap H^1(\Omega)$ is a consequence of Proposition~\ref{pwreg240}.
We state this as a proposition.

\begin{prop} \label{pwreg411}
Let $\Phi \in C(\overline \Omega) \cap H^1(\Omega)$.
Then $P (\Phi|_\Gamma) = \gamma (\Phi|_\Gamma)$ almost everywhere.
\end{prop}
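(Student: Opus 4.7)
The plan is to reduce the statement immediately to Proposition~\ref{pwreg240} by producing the decomposition $\Phi = (\Phi - w) + w$ with $w$ the ``boundary correction'' that lies in $H^1_0(\Omega)$.

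First I would invoke the existence of the variational solution. Since $\Omega$ has Lipschitz boundary and $\Phi \in H^1(\Omega)$, the trace $\Phi|_\Gamma = \Tr \Phi$ belongs to $\Tr H^1(\Omega) = H^{1/2}(\Gamma)$, so $u := \gamma(\Phi|_\Gamma) \in H^1(\Omega)$ is well defined by Lemma~\ref{lwreg202} (applied to $f_0 = -\ca \Phi$ appropriately, or more directly from the definition of $\gamma$ given just before Theorem~\ref{twreg102}). Note that for $\Phi \in C(\overline \Omega) \cap H^1(\Omega)$ the trace $\Tr \Phi$ coincides with the pointwise restriction $\Phi|_\Gamma$.

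Next I would set $w := \Phi - u \in H^1(\Omega)$. Since $\Tr w = \Tr \Phi - \Tr u = \Phi|_\Gamma - \Phi|_\Gamma = 0$, we obtain $w \in H^1_0(\Omega)$. Because $\ca u = 0$ in $\cd'(\Omega)$ (this is precisely the defining property of the variational solution), it follows that $\ca \Phi = \ca w$ as distributions on $\Omega$. Now $\Phi \in C(\overline \Omega) \cap H^1(\Omega) \subset C(\overline \Omega) \cap H^1_\loc(\Omega)$, so the hypotheses of Proposition~\ref{pwreg240} are met, and it yields $w \in C(\overline \Omega)$ together with
\[
P(\Phi|_\Gamma) = \Phi - w = u = \gamma(\Phi|_\Gamma)
\]
as elements of $C(\overline \Omega)$. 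In particular the equality holds almost everywhere on $\Omega$, which is the assertion of Proposition~\ref{pwreg411}.

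There is no real obstacle here: the entire work has been absorbed into Proposition~\ref{pwreg240}, which was proved via Lemma~\ref{lwreg205} (regularity up to the boundary of $C_0(\Omega)$ solutions) and the invertibility of $A^D$. The only point worth being careful about is the identification $\Tr \Phi = \Phi|_\Gamma$ for $\Phi \in C(\overline \Omega) \cap H^1(\Omega)$ on a Lipschitz domain, which is standard and justifies that $w = \Phi - \gamma(\Phi|_\Gamma)$ has vanishing trace, hence lies in $H^1_0(\Omega)$.
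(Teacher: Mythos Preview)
Your proof is correct and is exactly the argument the paper has in mind: the paper states that Proposition~\ref{pwreg411} ``is a consequence of Proposition~\ref{pwreg240}'' and gives no further details, while you have spelled out the application by taking $w = \Phi - \gamma(\Phi|_\Gamma) \in H^1_0(\Omega)$ (using $\ker \Tr = H^1_0(\Omega)$ on a Lipschitz domain) so that $\ca \Phi = \ca w$, whence Proposition~\ref{pwreg240} yields $P(\Phi|_\Gamma) = \Phi - w = \gamma(\Phi|_\Gamma)$.
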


So for the proof of Theorem~\ref{twreg102} it suffices to show that 
the map $\Phi \mapsto \Phi|_\Gamma$ from $C(\overline \Omega) \cap H^1(\Omega)$
into $C(\Gamma) \cap H^{1/2}(\Gamma)$ is surjective.
This is surprisingly difficult to prove.
We first prove Theorem~\ref{twreg102} for the Laplacian with the help of 
Proposition~\ref{pwreg411} and a deep result of Dahlberg.
As a consequence we obtain the desired surjectivity result.
Then as noticed earlier, Theorem~\ref{twreg102} follows for our general
elliptic operator.

\begin{thm} \label{tdtnc401}
Assume that $a_{kl} = \delta_{kl}$ and $b_k = c_k = c_0 = 0$
for all $k,l \in \{ 1,\ldots,d \} $.
Let $\varphi \in C(\Gamma) \cap H^{1/2}(\Gamma)$.
Then $P \varphi = \gamma \varphi$ almost everywhere.
\end{thm}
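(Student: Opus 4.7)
The strategy is to verify that $P\varphi$ itself belongs to $H^1(\Omega)$, so that $P\varphi$ becomes an admissible choice of $\Phi$ in Proposition~\ref{pwreg411}. Granted this, one has $P\varphi \in C(\overline\Omega) \cap H^1(\Omega)$ with $(P\varphi)|_\Gamma = \varphi$ (by the defining property of the classical solution), and Proposition~\ref{pwreg411} then yields
\[
P\varphi \;=\; P\bigl((P\varphi)|_\Gamma\bigr) \;=\; \gamma\bigl((P\varphi)|_\Gamma\bigr) \;=\; \gamma\varphi
\]
almost everywhere on $\Omega$, which is the desired conclusion.

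\textbf{The key step.} The heart of the matter is therefore the following regularity statement: on the bounded Lipschitz domain $\Omega$, the classical harmonic extension of a datum $\varphi \in C(\Gamma) \cap H^{1/2}(\Gamma)$ lies in $H^1(\Omega)$. This is exactly where Dahlberg's theorem is invoked. Dahlberg's work on the harmonic measure of a Lipschitz domain furnishes the solvability of the $L_2$-Dirichlet problem for the Laplacian with non-tangential maximal function controlled in $L_2(\Gamma)$, and from this one reads off that the harmonic extension of an $H^{1/2}(\Gamma)$ datum has finite Dirichlet energy. A convenient route to exploit this is dual: the variational solution $\gamma\varphi \in H^1(\Omega)$ is already available by definition, and Dahlberg's non-tangential convergence result says it has non-tangential boundary limits equal to $\varphi$ almost everywhere on $\Gamma$. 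Combined with the continuity of $\varphi$ and the Lipschitz geometry of $\Gamma$, this forces $\gamma\varphi$ to have a representative in $C(\overline\Omega)$ with boundary values $\varphi$; the uniqueness of the classical solution (Proposition~\ref{pwreg202.5}) then identifies this representative with $P\varphi$, which in particular proves $P\varphi \in H^1(\Omega)$.

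\textbf{Main obstacle.} The delicate point is the correct translation of Dahlberg's original statement (framed in terms of $L_2$ estimates for the non-tangential maximal function) into the $H^{1/2}$ formulation needed here, and the upgrade from almost-everywhere non-tangential convergence of $\gamma\varphi$ to genuine continuity up to $\Gamma$. The latter rests on the Lipschitz local structure of $\Gamma$ and the uniform continuity of $\varphi$, but has to be executed with some care since $H^{1/2}$ traces a priori only determine boundary values almost everywhere with respect to surface measure. Once these two identifications are in place, the proof collapses to a single application of Proposition~\ref{pwreg411}, with no further ingredients required beyond Dahlberg's theorem and the uniqueness already established in Section~\ref{Swreg2}.
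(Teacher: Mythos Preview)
Your strategy is different from the paper's, and the crucial step you flag as the ``main obstacle'' is in fact a genuine gap that your proposal does not close.

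The paper proceeds by approximation. It uses Dahlberg only in the mild form that for each $x\in\Omega$ the harmonic measure has a density $k_x\in L_1(\Gamma)$, so that $(P\psi)(x)=\int_\Gamma k_x\,\psi\,d\sigma$ for every $\psi\in C(\Gamma)$. Given $\varphi\in C(\Gamma)\cap H^{1/2}(\Gamma)$ (real valued, say), the paper picks $u\in H^1(\Omega)$ with $\Tr u=\varphi$, approximates $u$ in $H^1(\Omega)$ by $u_n\in C(\overline\Omega)\cap H^1(\Omega)$, and truncates at level $\|\varphi\|_\infty$ to obtain $v_n\in C(\overline\Omega)\cap H^1(\Omega)$ with $|\varphi_n|\le\|\varphi\|_\infty$, where $\varphi_n=v_n|_\Gamma$. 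Proposition~\ref{pwreg411} gives $P\varphi_n=\gamma\varphi_n$. Since $\varphi_n\to\varphi$ in $H^{1/2}(\Gamma)$, one has $\gamma\varphi_n\to\gamma\varphi$ in $H^1(\Omega)$; and since $\varphi_n\to\varphi$ a.e.\ on $\Gamma$ with uniform bound $\|\varphi\|_\infty$, dominated convergence against $k_x\in L_1(\Gamma)$ yields $(P\varphi_n)(x)\to(P\varphi)(x)$ for each $x$. The conclusion follows.

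Your route instead tries to show $\gamma\varphi\in C(\overline\Omega)$ directly. Two problems arise. First, the sentence ``Dahlberg's non-tangential convergence result says $\gamma\varphi$ has non-tangential boundary limits equal to $\varphi$ a.e.'' conflates two solution operators: Dahlberg's non-tangential estimates concern the harmonic-measure solution, not the variational solution $\gamma\varphi$; knowing these agree is precisely the content of the theorem. To get non-tangential control on $\gamma\varphi$ itself you would need something like the Jerison--Kenig $H^1$ regularity theory, which is substantially more than the paper invokes. Second, even granting non-tangential limits equal to $\varphi$ a.e., the passage to a continuous extension does \emph{not} follow from ``Lipschitz geometry and uniform continuity of $\varphi$'' alone. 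What is actually needed is a Fatou-type representation: a bounded harmonic function equals the harmonic-measure integral of its non-tangential boundary values (Hunt--Wheeden), combined with Dahlberg's mutual absolute continuity, so that $\gamma\varphi(x)=\int_\Gamma\varphi\,d\omega^x=(P\varphi)(x)$. That is a correct argument, but it imports machinery you neither state nor cite, and it is not lighter than the paper's approximation-plus-dominated-convergence proof. As written, the proposal identifies the obstacle but does not overcome it.
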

\begin{proof}
Let $x \in \Omega$.
By Dahlberg \cite{Dahlberg} Theorem~1 there exists a unique $k_x \in L_1(\Gamma)$ such that 
$(P \varphi)(x) = \int_\Gamma k_x \, \varphi \, d\sigma$
for all $\varphi \in C(\Gamma)$.

Now let  $\varphi \in C(\Gamma) \cap H^{1/2}(\Gamma)$.
Without loss of generality we may assume that $\varphi$ is real valued.
Then there exists a $u \in H^1(\Omega,\Ri)$ such that $\varphi = \Tr u$.
Since $H^1(\Omega) \cap C(\overline \Omega)$ is dense in $H^1(\Omega)$, there 
exist $u_1,u_2,\ldots \in H^1(\Omega,\Ri) \cap C(\overline \Omega)$ such that 
$\lim u_n = u$ in $H^1(\Omega)$.
Define $v_n = (-\|\varphi\|_{L_\infty(\Gamma)}) \vee u_n \wedge \|\varphi\|_{L_\infty(\Gamma)}$
for all $n \in \Ni$.
Then $v_n \in H^1(\Omega) \cap C(\overline \Omega)$.
Write $\varphi_n = v_n|_\Gamma = \Tr v_n \in C(\Gamma) \cap H^{1/2}(\Gamma)$
for all $n \in \Ni$.
Then $P \varphi_n = \gamma \varphi_n$ almost everywhere for all $n \in \Ni$
by Proposition~\ref{pwreg411}.

Note that 
\[
\lim \varphi_n 
= \lim \Tr v_n 
= (-\|\varphi\|_{L_\infty(\Gamma)}) \vee \Tr u \wedge \|\varphi\|_{L_\infty(\Gamma)}
= \varphi
\]
in $H^{1/2}(\Gamma)$.
So by continuity of $\gamma$ one deduces that 
$\gamma \varphi = \lim \gamma \varphi_n$ in $H^1(\Omega)$
and in particular in $L_2(\Omega)$.
Passing to a subsequence, if necessary, we may assume that 
\[
(\gamma \varphi)(x) = \lim (\gamma \varphi_n)(x)
\]
for almost all $x \in \Omega$.
Using again that $\lim \varphi_n = \varphi$ in $H^{1/2}(\Gamma)$
and therefore also in $L_2(\Gamma)$,
we may assume that 
$\lim \varphi_n = \varphi$ almost everywhere on $\Gamma$.
Hence if $x \in \Omega$, then 
\[
(P \varphi)(x)
= \int_\Gamma k_x \, \varphi \, d\sigma
= \lim \int_\Gamma k_x \, \varphi_n \, d\sigma
= \lim (P \varphi_n)(x)
\]
by the Lebesgue dominated convergence theorem.
Since $P \varphi_n = \gamma \varphi_n$ almost everywhere for all $n \in \Ni$
one concludes that $(P \varphi)(x) = (\gamma \varphi)(x)$ for almost all $x \in \Omega$.
\end{proof}

The desired surjectivity result is the following corollary
of Theorem~\ref{tdtnc401}.

\begin{cor} \label{cdtnc402}
Let $\Omega \subset \Ri^d$ be a bounded open set with Lipschitz boundary.
Let $\varphi \in C(\Gamma) \cap H^{1/2}(\Gamma)$.
Then there exists a $u \in H^1(\Omega) \cap C(\overline \Omega)$
such that $\varphi = u|_\Gamma$.
\end{cor}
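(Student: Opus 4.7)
The plan is to specialize the problem to the Laplacian, invoke Theorem~\ref{tdtnc401}, and then recognize that the classical solution must coincide with (an $H^1$-representative of) the variational solution, giving a function that is simultaneously in $C(\overline{\Omega})$ and in $H^1(\Omega)$ with the prescribed boundary values.

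Concretely, I would set $a_{kl} = \delta_{kl}$ and $b_k = c_k = c_0 = 0$, so that $\ca = -\Delta$. Since $\Omega$ has a Lipschitz boundary, it is Wiener regular (as noted in the introduction), and the Dirichlet Laplacian $A^D$ is strictly positive, so $0 \notin \sigma(A^D)$. Hence the full hypotheses of Theorem~\ref{twreg101} are met, and the classical-solution operator $P$ and the variational-solution operator $\gamma$ are both defined. Given $\varphi \in C(\Gamma) \cap H^{1/2}(\Gamma)$, I would let $u_0 := P\varphi \in C(\overline{\Omega}) \cap H^1_{\loc}(\Omega)$ and $u_1 := \gamma\varphi \in H^1(\Omega)$, both with boundary data $\varphi$ (in the sense of continuous restriction and trace, respectively). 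Theorem~\ref{tdtnc401} then yields $u_0 = u_1$ almost everywhere on $\Omega$.

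Taking $u := u_0 = P\varphi$ we obtain a function in $C(\overline{\Omega})$ with $u|_\Gamma = \varphi$. Since $u$ and $u_1$ represent the same class in $L_2(\Omega)$ (they are equal a.e.), and since the Hilbert space $H^1(\Omega)$ is defined as a subspace of $L_2(\Omega)$, the equivalence class of $u$ in $L_2(\Omega)$ is the element $u_1$ of $H^1(\Omega)$. Thus $u \in H^1(\Omega) \cap C(\overline{\Omega})$, and the corollary is proved.

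The only delicate point is the identification $P\varphi = \gamma\varphi$ that pins the continuous representative of the $H^1$ solution down to be $P\varphi$; but this is exactly what Theorem~\ref{tdtnc401} supplies (via Dahlberg's result). After that, the corollary is a straightforward matter of recognizing the $L_2$-class of a continuous function as an $H^1$-function. No further estimates or constructions are required.
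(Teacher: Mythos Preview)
Your proposal is correct and is exactly the argument the paper has in mind: the corollary is stated immediately after Theorem~\ref{tdtnc401} as a direct consequence, with no separate proof given, and your reasoning (take $u=P\varphi$ for the Laplacian and use $P\varphi=\gamma\varphi$ a.e.\ to see that $u\in H^1(\Omega)$) is precisely how one deduces it.
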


\begin{proof}[{\bf Proof of Theorem~\ref{twreg102}.}]
This follows from Corollary~\ref{cdtnc402} and Proposition~\ref{pwreg411}.
\end{proof}

\begin{cor} \label{cwreg305}
Adopt the notation and assumptions of Theorem~\ref{twreg101}.
Suppose that $\Omega$ has a Lipschitz boundary.
Let $u \in C(\overline \Omega) \cap H^1_\loc(\Omega)$ and 
suppose that $\ca u = 0$.
Then $u \in H^1(\Omega)$ if and only if $u|_\Gamma \in H^{1/2}(\Gamma)$.
\end{cor}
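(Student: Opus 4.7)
The plan is to handle the two implications separately, with almost all the content concentrated in the ``if'' direction where Theorem~\ref{twreg102} is the key tool.

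For the ``only if'' direction, I would simply invoke the standard trace theorem for Lipschitz domains: since $\Omega$ has a Lipschitz boundary, the trace operator $\Tr \colon H^1(\Omega) \to H^{1/2}(\Gamma)$ is bounded, and for $u \in H^1(\Omega) \cap C(\overline \Omega)$ the distributional trace $\Tr u$ agrees with $u|_\Gamma$. Thus $u|_\Gamma = \Tr u \in H^{1/2}(\Gamma)$.

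For the ``if'' direction, set $\varphi = u|_\Gamma$. By hypothesis $\varphi \in C(\Gamma)$, and by assumption $\varphi \in H^{1/2}(\Gamma) = \Tr H^1(\Omega)$ (the latter equality is the surjectivity of the trace on a Lipschitz domain). Since $u \in C(\overline \Omega) \cap H^1_\loc(\Omega)$ satisfies $\ca u = 0$ and $u|_\Gamma = \varphi$, uniqueness of the classical solution (Proposition~\ref{pwreg202.5}) gives $u = P \varphi$. On the other hand, $\gamma \varphi \in H^1(\Omega)$ is well defined because $\varphi \in \Tr H^1(\Omega)$. Now applying Theorem~\ref{twreg102} to this $\varphi \in C(\Gamma) \cap \Tr H^1(\Omega)$ yields $P \varphi = \gamma \varphi$ almost everywhere on $\Omega$. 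Combining, $u = \gamma \varphi$ almost everywhere, so $u$ coincides almost everywhere with an $H^1(\Omega)$-function, and since modifying a function on a null set does not change its membership in $H^1(\Omega)$, we conclude $u \in H^1(\Omega)$.

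There is really no obstacle here: Theorem~\ref{twreg102} has been set up precisely to identify classical and variational solutions when both make sense, and the corollary is a near-formal consequence. The only mild subtlety to keep in mind is to quote $H^{1/2}(\Gamma) = \Tr H^1(\Omega)$ (surjectivity of the trace on Lipschitz domains) so that $\gamma \varphi$ is defined, and to use Proposition~\ref{pwreg202.5} to match $u$ with $P \varphi$ before invoking Theorem~\ref{twreg102}.
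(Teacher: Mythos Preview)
Your proof is correct and follows the same approach as the paper: identify $u$ with $P(u|_\Gamma)$ and then invoke Theorem~\ref{twreg102} to conclude $u = \gamma(u|_\Gamma) \in H^1(\Omega)$. The paper's version is terser (it writes the `$\Leftarrow$' direction in one line and calls `$\Rightarrow$' trivial), but the content is identical.
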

\begin{proof}
`$\Rightarrow$' is trivial.

`$\Leftarrow$'.
Suppose $u|_\Gamma \in H^{1/2}(\Gamma)$.
Then $u = P(u|_\Gamma) = \gamma (u|_\Gamma) \in H^1(\Omega)$ by Theorem~\ref{twreg102}.
\end{proof}

\section{Semigroup and holomorphy on $C_0(\Omega)$} \label{Swreg4new}

In this section we prove Theorem~\ref{twreg103}.
Throughout this section we adopt the notation and assumptions of Theorem~\ref{twreg101}.
We need several lemmas.

\begin{lemma} \label{lwreg310}
The operator $A_c$ is invertible and $(A_c)^{-1} = (A^D)^{-1}|_{C_0(\Omega)}$.
\end{lemma}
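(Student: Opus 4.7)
The plan is to verify the two claims directly using the regularity result for $(A^D)^{-1}$ already established in Corollary~\ref{cwreg208}. Since $\Omega$ is bounded, we have $C_0(\Omega) \subset L_\infty(\Omega) \subset L_p(\Omega)$ for every $p \in (d,\infty]$, so the restriction $(A^D)^{-1}|_{C_0(\Omega)}$ is well defined and maps into $L_2(\Omega)$.

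First I would establish surjectivity together with the identification of the inverse. Given $f \in C_0(\Omega)$, set $u = (A^D)^{-1} f$. Then $u \in D(A^D) \subset H^1_0(\Omega)$ and $A^D u = f$. Applying Corollary~\ref{cwreg208} with, say, $p = \infty$, one obtains $u \in C_0(\Omega)$. Since $A^D u = f \in C_0(\Omega)$ as well, $u$ meets both conditions in the definition of $D(A_c)$, so $u \in D(A_c)$ and $A_c u = f$. Hence $A_c$ is surjective, and every preimage of $f$ under $A_c$ produced in this way equals $(A^D)^{-1} f$.

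For injectivity, if $u \in D(A_c)$ satisfies $A_c u = 0$, then by definition $u \in D(A^D)$ and $A^D u = 0$; the assumption $0 \notin \sigma(A^D)$ then forces $u = 0$. Combining surjectivity and injectivity, $A_c$ is a bijection from $D(A_c)$ onto $C_0(\Omega)$ with inverse $(A^D)^{-1}|_{C_0(\Omega)}$.

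There is no real obstacle here: the entire content of the lemma is packaged in Corollary~\ref{cwreg208}, which provides the $C_0(\Omega)$-regularity needed to remain inside $D(A_c)$ after applying $(A^D)^{-1}$. The only thing to be careful about is noting that boundedness of $\Omega$ lets $C_0(\Omega)$ be embedded into $L_p(\Omega)$ for $p > d$, so that the corollary can indeed be invoked for arbitrary $f \in C_0(\Omega)$.
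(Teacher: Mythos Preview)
Your proof is correct and follows essentially the same route as the paper: use Corollary~\ref{cwreg208} to see that $(A^D)^{-1}$ maps $C_0(\Omega)$ into $C_0(\Omega)$, conclude surjectivity of $A_c$, and obtain injectivity from $\ker A^D = \{0\}$. The only addition you make is the explicit remark that $C_0(\Omega) \subset L_p(\Omega)$ for $p>d$, which the paper leaves implicit.
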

\begin{proof}
If $v \in C_0(\Omega)$, then $(A^D)^{-1} v \in C_0(\Omega)$ by 
Corollary~\ref{cwreg208}.
Moreover, $A^D ((A^D)^{-1} v) = v$.
So $(A^D)^{-1} v \in D(A_c)$ and $A_c ((A^D)^{-1} v) = v$.
Hence $A_c$ is surjective.
Since $A^D$ is injective, also $A_c$ is injective.
Therefore $A_c$ is invertible and 
$(A_c)^{-1} = (A^D)^{-1}|_{C_0(\Omega)}$.
\end{proof}

The next proof is inspired by arguments in \cite{ABenilan} Theorem~4.4.

\begin{lemma} \label{lwreg311}
The domain $D(A_c)$ of the operator $A_c$ is dense in $C_0(\Omega)$.
\end{lemma}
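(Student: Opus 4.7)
The plan is to approximate $f \in C_0(\Omega)$ by $u_\lambda := \lambda(\lambda I + A^D)^{-1} f$ and show that $u_\lambda \in D(A_c)$ with $u_\lambda \to f$ in $C_0(\Omega)$ as $\lambda \to \infty$. For sufficiently large real $\lambda > 0$, the operator $\lambda I + A^D$ is invertible on $L_2(\Omega)$ (by discreteness of $\sigma(A^D)$ together with coercivity of $\gota_\lambda$), and its inverse maps $C_0(\Omega)$ into itself by Proposition~\ref{pwreg214} applied to $\ca + \lambda \, \one_\Omega$ (which has the same structural assumptions as $\ca$, with $c_0$ shifted by $\lambda$, and is injective for large $\lambda$). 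Then $u_\lambda \in C_0(\Omega)$ and $A^D u_\lambda = \lambda(f - u_\lambda) \in C_0(\Omega)$, so $u_\lambda \in D(A_c)$.

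To obtain a uniform resolvent bound $\|\lambda(\lambda I + A^D)^{-1}\|_{L_\infty \to L_\infty} \leq M$ for $\lambda$ past a threshold, I would transfer the Gaussian-bounds estimate $\|(\lambda I + B^D)^{-1}\|_{\infty \to \infty} \leq c_4/\lambda$ (recalled in the proof of Lemma~\ref{lwreg207}) through the $L_p \to L_q$ iteration scheme already used in Lemmas~\ref{lwreg207} and~\ref{lwreg210}. This uniform bound, via a standard $3\epsilon$-argument, reduces the convergence $u_\lambda \to f$ to its verification on a dense subspace $V \subset C_0(\Omega)$.

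For $V$ I would take $D(B_c)$, the domain of the part of $B^D$ in $C_0(\Omega)$. The Gaussian bounds on $e^{-tB^D}$ combined with Wiener regularity of $\Omega$ imply that $-B^D$ generates a strongly continuous semigroup on $C_0(\Omega)$ (Arendt--Batty theory), so $D(B_c)$ is dense. For $g \in D(B_c)$, writing $w_\lambda := (\lambda + B^D)^{-1} g$, one has
\[
\lambda(\lambda + A^D)^{-1} g - g
= \bigl(\lambda w_\lambda - g\bigr)
  - \lambda(\lambda + A^D)^{-1}(A^D - B^D) w_\lambda .
\]
The first bracket equals $-(\lambda + B^D)^{-1} B_c g$ and is $O(1/\lambda)$ in $C_0(\Omega)$. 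The second is the $(\lambda + A^D)^{-1}$-solution of the distribution $c_0 w_\lambda - \sum_k \partial_k(b_k w_\lambda)$, whose $L_\infty$ size is $O(1/\lambda)$, while the $\partial_k$-term contributes an additional $O(\lambda^{-1/2})$ through the heat-kernel bound $\|(\lambda + B^D)^{-1} \partial_k\|_{\infty \to \infty} = O(\lambda^{-1/2})$ transferred to $A^D$ by perturbation; thus this term also vanishes as $\lambda \to \infty$.

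The main obstacle is precisely this sup-norm convergence: one cannot invoke a $C_0$-semigroup theory for $A^D$ directly because that is exactly what Theorem~\ref{twreg103} is proving. The circumvention exploits the auxiliary operator $B^D$, whose own strong continuity on $C_0(\Omega)$ is available from Wiener regularity and Gaussian bounds, and treats $A^D - B^D$ as a (distribution-valued) first-order perturbation controlled by the resolvent estimates already developed in Section~\ref{Swreg2}.
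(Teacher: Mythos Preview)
Your strategy is genuinely different from the paper's: you try a direct resolvent approximation $\lambda(\lambda+A^D)^{-1}f\to f$ in $C_0(\Omega)$, whereas the paper argues by duality, taking a measure $\rho$ that annihilates $D(A_c)$, pushing it through $(A^D)^{-1*}$ into $W^{1,q}_0(\Omega)$ with $q<2$, using reflexivity there to extract a weak limit, and then killing $\rho$ via the form representation $\gota(v,u_n)$ for $v\in C_c^\infty(\Omega)$.

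The gap in your argument is the claimed bound $\|(\lambda+B^D)^{-1}\partial_k\|_{\infty\to\infty}=O(\lambda^{-1/2})$ (and its ``transfer'' to $A^D$). Gaussian \emph{upper} bounds on the heat kernel do \emph{not} yield pointwise gradient bounds of the form $\int|\partial_{y_k}K_t(x,y)|\,dy\le Ct^{-1/2}$ when the principal coefficients $a_{kl}$ are merely in $L_\infty$; such estimates are essentially equivalent to $L_\infty$-boundedness of the Riesz transform $\nabla L^{-1/2}$, which is well known to fail for rough divergence-form operators. Without this decay, your perturbation term $\lambda(\lambda+A^D)^{-1}(A^D-B^D)w_\lambda$ is only uniformly bounded in $L_\infty$, not tending to zero: from the iteration of Lemmas~\ref{lwreg207} and~\ref{lwreg210} you get $\|R_\lambda\|_{\infty\to\infty}\le C$ uniformly in $\lambda$, but no decay, and combining with $\|w_\lambda\|_\infty=O(\lambda^{-1})$ gives exactly $O(1)$. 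The paper's duality route sidesteps this completely: working in $W^{1,q}_0(\Omega)$ on the adjoint side only requires that $u\mapsto\gota(v,u)$ be continuous on $W^{1,q}_0$ for fixed $v\in C_c^\infty(\Omega)$, which follows immediately from $a_{kl},b_k,c_k,c_0\in L_\infty$, and uses no $L_\infty$ gradient information at all.
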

\begin{proof}
Let $\rho \in M(\Omega)$, the Banach space of all 
complex measures on $\Omega$ and suppose that $\int_\Omega v \, d\rho = 0$
for all $v \in D(A_c)$.
There exist $w_1,w_2,\ldots \in L_2(\Omega)$ such that 
$\sup \|w_n\|_{L_1(\Omega)} < \infty$ and 
$\lim \int_\Omega v \, \overline{w_n} = \int_\Omega v \, d\rho$
for all $v \in C_0(\Omega)$.

Choose $p = d+2$ and let $q \in (1,2)$ be the dual exponent of $p$.
It follows from Proposition~\ref{pwreg214} that the operator $(A^D)^{-1}$ 
extends to a continuous operator from $W^{-1,p}(\Omega)$ into $C_0(\Omega)$.
Hence the operator $(A^D)^{-1*}$ 
extends to a continuous operator from $M(\Omega)$ into $W^{1,q}_0(\Omega)$.
In particular, there exists a $c > 0$ such that 
$\|(A^D)^{-1*} w\|_{W^{1,q}_0(\Omega)} \leq c \, \|w\|_{L_1(\Omega)}$
for all $w \in L_2(\Omega)$.
For all $n \in \Ni$ set $u_n = (A^D)^{-1*} w_n$.
We emphasise that $u_n \in D((A^D)^*)$.
Then $\sup \|u_n\|_{W^{1,q}_0(\Omega)} < \infty$.
Note that $W^{1,q}_0(\Omega)$ is reflexive.
Hence passing to a subsequence if necessary, 
there exists a $u \in W^{1,q}_0(\Omega)$
such that $\lim u_n = u$ weakly in $W^{1,q}_0(\Omega)$.

Let $v \in C_c^\infty(\Omega)$.
Then $(A^D)^{-1} v \in D(A_c)$ by Lemma~\ref{lwreg310}.
Therefore 
\begin{eqnarray*}
0 
= \int_\Omega (A^D)^{-1} v \, d\rho
& = & \lim \int_\Omega \Big( (A^D)^{-1} v \Big) \, \overline{w_n}  \\
& = & \lim (v , (A^D)^{-1*} w_n)_{L_2(\Omega)}
= \lim (v , u_n)_{L_2(\Omega)}
= \lim \int_\Omega v \, \overline{u_n}
= \lim \int_\Omega v \, \overline u
. 
\end{eqnarray*}
Hence $u = 0$.

Again let $v \in C_c^\infty(\Omega)$.
Then 
\[
\int_\Omega v \, d\rho
= \lim \int_\Omega v \, \overline{w_n}  
= \lim (v, (A^D)^* u_n)_{L_2(\Omega)}  
= \lim \gota(v, u_n)
= 0
,  \]
where we used (\ref{eSwreg1;4}).
So $\rho = 0$ and $D(A_c)$ is dense in $C_0(\Omega)$.
\end{proof}

Now we prove that $-A_c$ generates a holomorphic $C_0$-semigroup.

\begin{proof}[{\bf Proof of Theorem~\ref{twreg103}.}]
Let $S$ be the semigroup generated by $-A^D$.
Then $S$ has a kernel with Gaussian upper bounds by \cite{Ouh5} Theorem~6.10
(see also \cite{Daners} Theorem~6.1 for operators with 
real valued coefficients and \cite{AE1} Theorems~3.1 and 4.4).
Hence the semigroup $S$ extends consistently 
to a semigroup $S^{(p)}$ on $L_p(\Omega)$
for all $p \in [1,\infty]$.

Choose $p \in (d,\infty]$.
Let $t > 0$ and $u \in L_2(\Omega)$.
Since $S$ is a holomorphic semigroup, one deduces that $S_t u \in D(A^D)$
and $A^D \, S_t u \in L_2(\Omega)$.
Next the Gaussian kernel bounds imply that $S_t$ maps $L_2(\Omega)$ 
into $L_p(\Omega)$. 
So $A^D \, S_{2t} u = S_t \, A^D \, S_t u \in L_p(\Omega)$
and 
\begin{equation}
S_{2t} u \in (A^D)^{-1} (L_p(\Omega)) \subset C_0(\Omega)
\label{etwreg103;10}
\end{equation}
by Corollary~\ref{cwreg208}.
Hence $S_t C_0(\Omega) \subset C_0(\Omega)$ for all $t > 0$.
For all $t > 0$ let $S^c_t = S_t|_{C_0(\Omega)} \colon C_0(\Omega) \to C_0(\Omega)$.
Then $(S^c_t)_{t > 0}$ is a semigroup on $C_0(\Omega)$.
Moreover, using again the Gaussian kernel bounds
there exists an $M \geq 1$ such that 
$\|S^c_t\| \leq \|S^{(\infty)}_t\| \leq M$
for all $t \in (0,1]$.

Let $t \in (0,1]$ and $u \in D(A_c)$.
Then 
\[
\|(I - S^c_t) u\|_{C_0(\Omega)}
= \| \int_0^t S^s \, A_c u \, ds\|_{C_0(\Omega)}
\leq \int_0^t M \, \|A_c u\|_\infty \, ds
= M \, t \, \|A_c u\|_\infty
.  \]
So $\lim_{t \downarrow 0} S^c_t u = u$ in $C_0(\Omega)$.
Since $D(A_c)$ is dense in $C_0(\Omega)$ by Lemma~\ref{lwreg311}, 
one deduces that $\lim_{t \downarrow 0} S^c_t u = u$ in $C_0(\Omega)$
for all $u \in C_0(\Omega)$.
So $S^c$ is a $C_0$-semigroup.

Finally, using once more the Gaussian kernel bounds,
it follows that the semigroup $S^c$ is holomorphic (see \cite{AE1} Theorem~5.4).
\end{proof}

We conclude this section by establishing Gaussian kernels which are continuous 
up to the boundary.
For this we use the following special case of \cite{AE8} Theorem~2.1.

\begin{prop} \label{pwreg350}
Suppose that $|\partial \Omega| = 0$.
Let $T$ be a semigroup in $L_2(\Omega)$ such that 
$T_t L_2(\Omega) \subset C(\overline \Omega)$ and 
$T_t^* L_2(\Omega) \subset C(\overline \Omega)$
for all $t > 0$.
Then for all $t > 0$ there exists a unique $k_t \in C(\overline \Omega \times \overline \Omega)$
such that 
\[
(T_t u)(x) = \int_\Omega k_t(x,y) \, u(y) \, dy
\]
for all $u \in L_2(\Omega)$ and $x \in \Omega$.
\end{prop}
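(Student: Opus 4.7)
The plan is to construct $k_t(x,\cdot)$ pointwise for each $x \in \overline \Omega$ via Riesz representation, and then promote separate continuity in each variable to joint continuity by exploiting the semigroup decomposition together with the smoothing that renders an intermediate kernel essentially bounded.

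First I would observe that, since $|\partial \Omega| = 0$ and $\Omega$ is bounded, the inclusion $C(\overline \Omega) \hookrightarrow L_2(\Omega)$ is continuous, so the closed graph theorem upgrades the hypotheses to $T_s, T_s^* \colon L_2(\Omega) \to C(\overline \Omega)$ being bounded for every $s > 0$. For each $x \in \overline \Omega$, the Riesz representation theorem applied to the continuous functional $u \mapsto (T_t u)(x)$ on $L_2(\Omega)$ then produces a unique $k_t(x,\cdot) \in L_2(\Omega)$ with $(T_t u)(x) = \int_\Omega k_t(x,y)\, u(y)\, dy$; uniqueness of $k_t$ as an element of $C(\overline \Omega \times \overline \Omega)$ reduces to its being pointwise determined.

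For separate continuity I would use the semigroup identity $T_t = T_{t/2} T_{t/2}$, which by Fubini (justified by the $L_2$ bounds on the factor kernels) gives
\[
k_t(x, y) = \int_\Omega k_{t/2}(x, z)\, k_{t/2}(z, y) \, dz.
\]
The right-hand side is the value at $y$ of the image of $k_{t/2}(x,\cdot) \in L_2(\Omega)$ under the bilinear transpose of $T_{t/2}$, which maps $L_2(\Omega)$ into $C(\overline \Omega)$ by the hypothesis on $T_{t/2}^*$. Consequently $k_t(x,\cdot) \in C(\overline \Omega)$ for each $x$, and the symmetric argument via $T_t^*$ gives $k_t(\cdot, y) \in C(\overline \Omega)$ for each $y$.

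The main obstacle is to pass from separate to joint continuity, which does not follow from the above alone. For this I would first establish that $k_{t/2}$ is essentially bounded on $\Omega \times \Omega$: duality applied to $T_{t/4}^* \colon L_2 \to L_\infty$ yields $T_{t/4} \colon L_1 \to L_2$ bounded, and composing with $T_{t/4} \colon L_2 \to L_\infty$ gives $T_{t/2} \colon L_1 \to L_\infty$ bounded, whence $k_{t/2} \in L_\infty(\Omega \times \Omega)$ by the standard identification of such operators with $L_\infty$ kernels. Applying the previous paragraph at time $t/2$ shows $k_{t/2}$ is separately continuous in each variable, so for $x_n \to x$ in $\overline \Omega$ one has $k_{t/2}(x_n, y) \to k_{t/2}(x, y)$ pointwise in $y$; together with the uniform bound and $|\Omega| < \infty$, the dominated convergence theorem delivers $k_{t/2}(x_n, \cdot) \to k_{t/2}(x, \cdot)$ in $L_2(\Omega)$. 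Substituting into the estimate
\[
\|k_t(x, \cdot) - k_t(x', \cdot)\|_{C(\overline \Omega)} \leq C \, \|k_{t/2}(x, \cdot) - k_{t/2}(x', \cdot)\|_{L_2(\Omega)},
\]
I conclude norm-continuity of $x \mapsto k_t(x, \cdot)$ into $C(\overline \Omega)$, and joint continuity of $k_t$ on $\overline \Omega \times \overline \Omega$ is then immediate.
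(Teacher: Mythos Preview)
The paper does not give its own proof of this proposition; it is quoted without argument as a special case of \cite{AE8} Theorem~2.1. Your argument is correct and supplies a self-contained proof along standard lines: Riesz representation for the slice kernels, the factorisation $T_t=T_{t/2}T_{t/2}$, the $L_1\to L_\infty$ bound on $T_{t/2}$ obtained by composing $T_{t/4}\colon L_1\to L_2$ (from duality applied to $T_{t/4}^*\colon L_2\to L_\infty$) with $T_{t/4}\colon L_2\to L_\infty$, and finally dominated convergence to upgrade separate continuity to norm continuity of $x\mapsto k_t(x,\cdot)$ in $C(\overline\Omega)$.

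One small point deserves a word. When you invoke Fubini to derive the composition identity $k_t(x,y)=\int_\Omega k_{t/2}(x,z)\,k_{t/2}(z,y)\,dz$, you need joint measurability of $(z,y)\mapsto k_{t/2}(z,y)$, not merely the slice-wise $L_2$ bounds you cite. This is harmless but should be said: either observe that $z\mapsto k_{t/2}(z,\cdot)\in L_2(\Omega)$ is weakly continuous (since $z\mapsto (T_{t/2}u)(z)$ is continuous for every $u\in L_2(\Omega)$) and apply Pettis' measurability theorem, or---more in keeping with your own order of ideas---run the $L_1\to L_\infty$ argument \emph{first} and take the Dunford--Pettis kernel, which is jointly measurable by construction, as the working representative. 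With that adjustment the proof is complete.
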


We continue to denote by $S$ the semigroup generated by $-A^D$ 
and we also denote by $S$ the holomorphic extension.
For all $\theta \in (0,\pi]$ let 
$\Sigma(\theta) = \{ z \in \Ci \setminus \{ 0 \} : |\arg z| < \theta \} $
be the open sector with (half)angle~$\theta$.

\begin{thm} \label{twreg351}
Adopt the notation and assumptions of Theorem~\ref{twreg101}.
In addition assume that $b_k$ is real valued for all $k \in \{ 1,\ldots,d \} $.
Let $\theta$ be the holomorphy angle of $S$.
Then for all $z \in \Sigma(\theta)$ there exists a unique
$k_z \in C(\overline \Omega \times \overline \Omega)$ such that the following is 
valid.
\begin{tabelR}
\item \label{twreg351-1}
$(S_z u)(x) = \int_\Omega k_z(x,y) \, u(y) \, dy$ 
for all $z \in \Sigma(\theta)$, $u \in L_2(\Omega)$ and $x \in \overline \Omega$.
\item \label{twreg351-2}
$k_z(x,y) = 0$ for all $z \in \Sigma(\theta)$ and 
$x,y \in \overline \Omega$ with $x \in \partial \Omega$ or $y \in \partial \Omega$.
\item \label{twreg351-3}
The map $z \mapsto k_z$ is holomorphic from $\Sigma(\theta)$ into
$C(\overline \Omega \times \overline \Omega)$.
\item \label{twreg351-4}
For all $\theta' \in (0,\theta)$ there exist $b,c,\omega > 0$ such that 
\[
|k_z(x,y)|
\leq c \, |z|^{-d/2} \, e^{\omega |z|} \, e^{- b \frac{|x-y|^2}{|z|}}
\]
for all $z \in \Sigma(\theta')$ and $x,y \in \overline \Omega$.
\end{tabelR}
\end{thm}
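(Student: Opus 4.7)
The plan is to reduce everything to Proposition~\ref{pwreg350}, applied not only at real times but pointwise at every $z \in \Sigma(\theta)$. The hypothesis that each $b_k$ is real valued is crucial, because it ensures that the adjoint form $\gota^*$ again falls under the hypotheses of Theorem~\ref{twreg101}: transposition produces second-order coefficients $a_{lk}$ (still real with the same ellipticity constant), the coefficients playing the role of ``$c_k$'' become $\overline{b_k} = b_k$ (real), those playing the role of ``$b_k$'' become $\overline{c_k}$ (bounded and complex), and $\ker((A^D)^*) = \{0\}$ is equivalent to $0 \not\in \sigma(A^D)$ by Fredholm theory. Therefore Corollary~\ref{cwreg208} applies both to $A^D$ and to $(A^D)^*$, and in particular $S_t L_2(\Omega) \subset C_0(\Omega)$ and $S_t^* L_2(\Omega) \subset C_0(\Omega)$ for every $t > 0$.

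For real $t > 0$, Proposition~\ref{pwreg350} immediately furnishes a continuous kernel $k_t \in C(\overline \Omega \times \overline \Omega)$. Since $S_t L_2(\Omega) \subset C_0(\Omega)$, for each $x \in \partial\Omega$ the section $k_t(x,\cdot)$ vanishes in $L_2$, hence identically by continuity; the adjoint inclusion gives the corresponding vanishing in the second variable. To pass to complex $z \in \Sigma(\theta)$ the plan is to write $z = t + (z-t)$ with $t > 0$ chosen so small that $z - t \in \Sigma(\theta)$; a simple planar-geometry check using $|\arg z| < \theta$ strictly shows such a $t$ exists. Then $S_z = S_t \, S_{z-t}$ on $L_2$; holomorphy makes $S_{z-t}$ bounded on $L_2$, while $S_t$ maps $L_2$ into $C_0(\Omega)$, so $S_z L_2 \subset C_0(\Omega)$, and similarly $S_z^* L_2 \subset C_0(\Omega)$. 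The reasoning behind Proposition~\ref{pwreg350} being operator-by-operator in nature, it then produces the desired kernel $k_z \in C(\overline\Omega \times \overline\Omega)$ with the boundary vanishing property, establishing (I) and (II).

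For (IV), Gaussian bounds on $\Omega \times \Omega$ throughout $\Sigma(\theta')$ with $\theta' < \theta$ are classical: they follow from the real-time Gaussian bounds used in the proof of Theorem~\ref{twreg103} combined with the holomorphic extension results of \cite{AE1} (Theorems~3.1 and~4.4), and the continuity of $k_z$ on the closure established in the previous step extends them to $\overline\Omega \times \overline\Omega$. Finally, for (III), I would fix $\theta' \in (0,\theta)$ and use (IV) to obtain a locally uniform bound on $\|k_z\|_{C(\overline\Omega \times \overline\Omega)}$ for $z \in \Sigma(\theta')$. Since $z \mapsto (S_z u, v)_{L_2} = \int \!\!\int k_z(x,y) \, u(y) \, \overline{v(x)} \, dy\, dx$ is holomorphic for all $u,v \in L_2(\Omega)$, a Vitali-type theorem for $C(\overline\Omega \times \overline\Omega)$-valued maps, combined with the density of tensor products $u \otimes \overline v$ in $L_1(\Omega \times \Omega)$, promotes this pointwise weak holomorphy to norm holomorphy of $z \mapsto k_z$.

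The main obstacle I anticipate is (III): Proposition~\ref{pwreg350} only yields existence of the continuous kernel separately for each $z$, so promoting scalar holomorphy of $z \mapsto k_z(x,y)$ to norm holomorphy in $C(\overline\Omega \times \overline\Omega)$ requires the uniform Gaussian estimates of (IV) together with a Vitali-type (or direct Cauchy-integral) argument. The operator-level step $S_z L_2 \subset C_0(\Omega)$ is easy, but it crucially depends on the strict inequality $|\arg z| < \theta$ used in the factorisation $S_z = S_t S_{z-t}$.
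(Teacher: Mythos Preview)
Your proposal is correct and follows essentially the same route as the paper: verify $S_z L_2 \subset C_0(\Omega)$ and $S_z^* L_2 \subset C_0(\Omega)$ for every $z \in \Sigma(\theta)$ (the paper does this tersely via~(\ref{etwreg103;10}), you spell out the factorisation $S_z = S_t S_{z-t}$), then apply Proposition~\ref{pwreg350} pointwise in $z$ to get (I) and (II), invoke \cite{AE1} for the Gaussian bounds (IV), and finally promote weak holomorphy to norm holomorphy for (III). The only cosmetic difference is in (III): where you describe a Vitali-type argument using local boundedness from (IV) and density of tensors in $L_1(\Omega\times\Omega)$, the paper simply tests $k_z$ against elementary tensors $v \otimes \overline w$ with $v,w \in C_0(\Omega)$ and cites \cite{ArN} Theorem~3.1, which is precisely the ``weak holomorphy on a separating subset plus local boundedness implies strong holomorphy'' statement you are invoking by hand.
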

\begin{proof}
It follows from (\ref{etwreg103;10}) that $S_z L_2(\Omega) \subset C_0(\Omega)$
for all $z \in \Sigma(\theta)$.
Since the coefficients $b_k$ are real, also the adjoint operator
satisfies the conditions of Theorem~\ref{twreg101}.
Therefore $S_z^* L_2(\Omega) \subset C_0(\Omega)$
for all $z \in \Sigma(\theta)$.
It follows from Proposition~\ref{pwreg350} that for all 
$z \in \Sigma(\theta)$ there exists a unique
$k_z \in C(\overline \Omega \times \overline \Omega)$ such that
$(S_z u)(x) = \int_\Omega k_z(x,y) \, u(y) \, dy$ 
for all $u \in L_2(\Omega)$ and $x \in \overline \Omega$.
Since $S_z u \in C_0(\Omega)$ one deduces that $k_z(x,y) = 0$
for all $z \in \Sigma(\theta)$, $x \in \partial \Omega$ and 
$y \in \overline \Omega$.
Considering adjoints the same is valid with $x$ and $y$ interchanged.
If $v,w \in C_0(\Omega)$, then the map
\[
z \mapsto 
\langle k_z, v \otimes \overline w 
       \rangle_{C(\overline \Omega \times \overline \Omega) \times C(\overline \Omega \times \overline \Omega)^*}
= (S_z u, v)_{L_2(\Omega)}
\]
is holomorphic on $\Sigma(\theta)$.
Therefore Statement~\ref{twreg351-3} is a consequence of \cite{ArN} Theorem~3.1.
The Gaussian bounds of Statement~\ref{twreg351-4} follow from 
\cite{AE1} Theorem~5.4.
\end{proof}

\subsection*{Acknowledgements}
The second-named author is most grateful for the hospitality extended
to him during a fruitful stay at the University of Ulm.
He wishes to thank the University of Ulm for financial support. 
Part of this work is supported by an
NZ-EU IRSES counterpart fund and the Marsden Fund Council from Government funding,
administered by the Royal Society of New Zealand.
Part of this work is supported by the
EU Marie Curie IRSES program, project `AOS', No.~318910.

\end{document}